\newcommand{\be}{\begin{equation}}
\newcommand{\ee}{\end{equation}}
\newcommand{\beq}{\begin{eqnarray}}
\newcommand{\eeq}{\end{eqnarray}}
\newtheorem{thm}{Theorem} [section]
\newtheorem{cor}[thm]{Corollary}
\newtheorem{lem}[thm]{Lemma}
\newtheorem{prop}[thm]{Proposition}
\theoremstyle{definition}
\newtheorem{defn}[thm]{Definition}
\theoremstyle{remark}
\newtheorem{con}[thm]{Conjecture}
\numberwithin{equation}{section}
\begin{document}
\title[Generic Newton polygons for $L$-functions of $(A,B)$-exponential sums]
{Generic Newton polygons for $L$-functions of $(A,B)$-exponential sums}
\begin{abstract}
In this paper, we consider the following $(A, B)$-polynomial $f$ over finite field:
\begin{equation*}
f(x_0,x_1,\cdots,x_n)=x_0^Ah(x_1,\cdots,x_n)+g(x_1,\cdots,x_n)+P_B(1/x_0),
\end{equation*}
where $h$ is a Deligne polynomial of degree $d$, $g$ is an arbitrary polynomial of degree $< dB/(A+B)$ and $P_B(y)$ is a one-variable polynomial of degree $\le B$. Let $\Delta$ be the Newton polyhedron of $f$ at infinity.
We show that $\Delta$ is generically ordinary if $p\equiv 1 \mod D$,
where $D$ is a constant only determined by $\Delta$.
In other words, we prove that the Adolphson--Sperber conjecture is true for $\Delta$.
\end{abstract}

\author[L.P. Yang]{Liping Yang}
\address{School of Mathematical Sciences, Capital Normal University, Beijing 100048, P.R. China}
\email{yanglp2013@126.com}
\author[H.Zhang]{Hao Zhang}
\address{Shing-Tung Yau Center of Southeast University, Yifu Architecture Building, No.2, Sipailou, Nanjing 210096, P.R. China}
\email{zhanhgao@126.com}

\keywords{$(A,B)$-polynomial, $L$-function, Exponential sum, Newton polygon, Hodge polygon.}
\subjclass[2010]{Primary 11T06, 11T23, 11S40. }
\maketitle

\section{Introduction}

Let $p$ be a prime number, $q$ be a power of $p$, and $\mathbb{F}_q$ be the finite field of $q$ elements.
For each positive integer $k$, let $\mathbb{F}_{q^k}$ be the finite extension of $\mathbb{F}_q$ of degree $k$ and $\mathbb{F}_{q^{k}}^{*}$ denote the set of non-zero elements in $\mathbb{F}_{q^{k}}$. Given a Laurent polynomial $f(x)\in \mathbb{F}_q[x_1^{\pm},\cdots ,x_n^{\pm}]$, we define the toric exponential sum as
$$S_k^*(f):=\sum_{x\in (\mathbb{F}_{q^{k}}^{*})^{n}} \zeta_p^{{\rm Tr}_k f(x)},$$
where ${\rm Tr}_k: \mathbb{F}_{q^k}\rightarrow \mathbb{F}_p$ denotes the trace of the field extension.
A well known theorem of Dwork--Bombieri--Grothendieck implies that the following generating $L$-function is rational:
$$L^*(f,T):=\exp\Big(\sum_{k=1}^{\infty} \frac{S_k^*(f)T^k}{k}\Big)=\frac{\prod_{i=1}^{d_1}(1-\alpha_i T)}{\prod_{j=1}^{d_2}(1-\beta_j T)},$$
where $\alpha_i (1\le i\le d_1)$ and $\beta_j (1\le j\le d_2)$ are non-zero algebraic integers. Equivalently, $$S_k^*(f)=\beta_1^k+\cdots+\beta_{d_2}^k-\alpha_1^k-\cdots-\alpha_{d_1}^k. $$
We are interested in studying the exponential sum $S_k^*(f)$, which is reduced to understanding the reciprocal zeros and reciprocal poles.
 However, one does not even know the number of zeros and poles
if there is no smooth condition on $f$.

Let us introduce a smooth condition first. Using multi-index notation, we write

$$f(x)=\sum_{\mathbf v\in A}a_{\mathbf v}x^{\mathbf{v}},$$
where $A$ is a finite subset of $\mathbb Z^n$ and $a_{\mathbf v}\in \mathbb{F}_q^{*}$. The convex hull of $A$ and the origin is called the {\it Newton polyhedron of $f$ at infinity},
denoted by $\Delta_{\infty} (f)$. If $\delta$ is a face of $\Delta_{\infty}(f)$, we define the restriction of $f$ to $\delta$
to be $$f_{\delta}=\sum_{\mathbf{v}\in \delta}a_{\mathbf v}x^{\mathbf{v}}.$$
Let $\mathbb{\bar{F}}_q^{*}$ denote the nonzero elements of an algebraic closure of $\mathbb F_q$.
\begin{defn}
The Laurent polynomial $f$ is called {\it $\Delta_{\infty}(f)$-nondegenerate} if for each closed face $\delta$ of $\Delta_{\infty} (f)$
 which does not contain the origin, the system of equations
$$x_1\frac{\partial f_{\delta}}{\partial x_1}=\cdots=x_n \frac{\partial f_{\delta}}{\partial x_n}=0$$
has no common zeros in $\mathbb{\bar{F}}_q^{*}$.
\end{defn}

 We always assume that $\Delta_{\infty}(f)$ is $n$-dimensional. If $f$ is $\Delta_{\infty}(f)$-nondegenerate, then
 Adolphson and Sperber \cite{AS89} proved that the $L$-function $L^*(f,T)^{(-1)^{n-1}}$ is a polynomial of degree $n!{\rm Vol}(\Delta_{\infty}(f))$, where ${\rm Vol}(\Delta_{\infty}(f))$ is the volume of $\Delta_{\infty}(f)$.
Using $l$-adic methods, Denef and Loeser \cite{DL} explicitly determined the complex absolute values of the zeros of $L^*(f,T)^{(-1)^{n-1}}$.
Hence the remaining interesting question is to determine the $p$-adic absolute values of the zeros.
This question equals to determine the $q$-adic Newton polygon of the polynomial
$$L^*(f,T)^{(-1)^{n-1}}=\sum_{i=0}^{n!{\rm Vol}(\Delta_{\infty}(f))}A_i(f)T^i, A_i(f)\in \mathbb{Z}[\zeta_p].$$
The $q$-adic {\it Newton polygon} of $L^*(f,T)^{(-1)^{n-1}}$, denoted by ${\rm NP}(f)$, is defined as the lower convex  closure in $\mathbb{R}^2$ of the points $$(i,{\rm ord}_q A_r(f)),\ i=0,1,...,n!{\rm Vol}(\Delta_{\infty}(f)).$$

Unfortunately, there is no effective way to compute the Newton polygon since the Newton polygon varies greatly as $f$ and $p$ vary.
However, when $f$ is $\Delta_{\infty}(f)$-nondegenerate, Adolphson and Sperber \cite{AS89} proved that
${\rm NP}(f)$ has a certain combinatorial lower bound, called the Hodge polygon (see Definition \ref{def1} below).

Let $\Delta$ be the $n$-dimensional integral convex polyhedron $\Delta_{\infty}(f)$ in $\mathbb{R}^n$,
and $C(\Delta)$ be the cone in $\mathbb{R}^n$ generated by $\Delta$.
 For a point $u\in \mathbb{R}^n$, the weight $w(u)$ is defined
 to be the smallest non-negative real number $c$ such that $u\in c\Delta$.
 If there is no such $c$, then we define $w(u)=\infty$.
Let $\delta$ be a co-dimensional $1$ face of $\Delta$ which does not contain the origin. Let $C(\delta)$ be the cone in $\mathbb R^n$ over $\delta$.
Let $\sum_{i=1}^ne_iX_i=1$ be the equation of the hyperplane $\delta$ in $\mathbb{R}^n$, where $e_i$ ($i=1,\cdots,n$) are rational numbers.
Then for $u=(u_1,\cdots,u_n)\in C(\delta)$, one has $w(u)=\sum_{i=1}^ne_iu_i$.
Let $D(\delta)$ be the least common denominator of the rational numbers $e_i$ $(1\le i\le  n)$, and
$D(\Delta)$ be the least common multiple of all the $D(\delta)$ with $\delta$ running over all
the co-dimensional $1$ faces of $\Delta$ which do not contain the origin.
Then $w(\mathbb{Z}^n)\subseteq \frac{1}{D(\Delta)}\mathbb{Z}_{\ge 0}\cup \{\infty\},$
where $\mathbb{Z}_{\ge 0}$ denotes the set of nonnegative integers.
The integer $D:=D(\Delta)$ is called the denominator of $\Delta$.

For an integer $k$, let
$$W_{\Delta}(k):={\rm card} \Big\{u\in \mathbb{Z}^n: w(u)=\frac{k}{D}\Big\}$$
be the number of lattice points in $\mathbb{Z}^n$ with weight $k/D$.
Let $$H_{\Delta}(k):=\sum_{i=0}^n(-1)^i \binom{n}{i}W_{\Delta}(k-iD).$$

\begin{defn}\label{def1}
The {\it Hodge polygon} ${\rm HP}(\Delta)$ of $\Delta$ is defined to be the lower convex polygon in $\mathbb{R}^2$
with the origin and vertices
$$\Big(\sum_{k=0}^{m}H_{\Delta}(k), \frac{1}{D}\sum_{k=0}^mk H_{\Delta}(k)\Big),\quad m=0,1,2,\cdots,nD.$$
\end{defn}

In this paper, we are interested in the variation of ${\rm NP}(f)$ when $f$ varies.
Let $\mathcal{M}_p(\Delta)$
be the set of $\Delta$-nondegenerate $f$ over the algebraic closure of $\mathbb{F}_p$ with $\Delta_{\infty}(f)=\Delta$.
By the Grothendieck specialization theorem, the lowest Newton polygon
$${\rm GNP}(\Delta,p):=\inf_{f\in \mathcal{M}_p(\Delta)}{\rm NP}(f)$$
exists, and can be obtained for all $f$ in some Zariski open dense subset of $\mathcal{M}_p(\Delta)$.
The lowest polygon is called the {\it generic Newton polygon}, denoted by ${\rm GNP}(\Delta,p)$.

It follows from Adolphson and Sperber \cite{AS89} that for $f\in \mathcal{M}_p(\Delta)$, one has ${\rm NP}(f)\ge {\rm HP}(\Delta)$.
For every prime $p$ and every $f\in \mathcal{M}_p(\Delta)$, we have
$${\rm NP}(f)\ge {\rm GNP}(\Delta, p)\ge {\rm HP}(\Delta).$$
One intriguing question is to decide when the generic Newton polygon coincides with the Hodge polygon.
The family $\mathcal{M}_p(\Delta)$ is called {\it generically ordinary} if ${\rm GNP}(\Delta, p)= {\rm HP}(\Delta)$. And a Laurent polynomial is called {\it ordinary} if ${\rm NP}(f)={\rm HP}(\Delta).$
Based on results of Sperber \cite{SP1,SP2,SP3} and Carpentier \cite{MC} on $p$-adic estimates of exponential sums, Adolphson and Sperber \cite{AS89} conjectured that
\begin{con}(Adolphson--Sperber) If $p\equiv 1 \pmod D$, then ${\rm GNP}(\Delta,p)={\rm HP}(\Delta)$.
\end{con}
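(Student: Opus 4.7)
The plan is to reduce the conjecture for $\Delta$, via Wan's facial decomposition theorem, to local generic-ordinariness statements on the codimension-$1$ faces of $\Delta$ not containing the origin, and then to dispatch each such statement by a direct analysis of the Dwork $p$-adic cohomology on the resulting simplicial cones.

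First I would apply Wan's facial decomposition theorem: to show ${\rm GNP}(\Delta,p)={\rm HP}(\Delta)$ it suffices to show, for every codimension-$1$ face $\delta$ of $\Delta$ with $0\notin\delta$, that the restriction $f_\delta$ is generically ordinary with respect to its cone polyhedron $C(\delta)$, the convex hull of $\delta$ and the origin. Next I would enumerate these faces. Since $h$ is Deligne of degree $d$, the Newton polyhedron $\Delta_\infty(h)$ is the standard $d$-simplex with vertices $0,\,de_1,\dots,de_n$; the bound $\deg g<dB/(A+B)$ places $g$'s support strictly inside $\Delta$; and $P_B(1/x_0)$ only contributes the segment $\{(-k,0,\dots,0):1\le k\le B\}$. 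A direct analysis of supporting hyperplanes then shows that $\Delta$ has exactly two codimension-$1$ facets avoiding the origin: the \emph{top} face $F_{\text{top}}=\{A\}\times\Delta_\infty(h)$, with restriction $f_{F_{\text{top}}}=x_0^A h$, and the \emph{main} face $F_{\text{main}}$, the convex hull of $\{(A,de_1),\dots,(A,de_n),(-B,0,\dots,0)\}$, with restriction $f_{F_{\text{main}}}=x_0^A h_d+c_B x_0^{-B}$, where $h_d$ is the degree-$d$ homogeneous part of $h$ and $c_B$ is the leading coefficient of $P_B$. All remaining facets come from the coordinate hyperplanes $v_i=0$ and contain the origin, since the origin lies on the segment from $(A,0,\dots,0)$ to $(-B,0,\dots,0)$.

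I would then prove local generic ordinariness on each of the two remaining faces. A first observation is that both $C(F_{\text{top}})$ and $C(F_{\text{main}})$ are $(n+1)$-simplices (each has $n+2$ affinely independent vertices), so we are in the simplicial-cone regime. For $F_{\text{main}}$, the restriction $f_{F_{\text{main}}}=x_0^A h_d+c_B x_0^{-B}$ has a product-like structure: the $h_d$ piece encodes a nondegenerate projective hypersurface, while $c_B x_0^{-B}$ is a single spike monomial. I would build an explicit monomial basis of the chain-level Dwork complex on $C(F_{\text{main}})$ compatible with the Hodge filtration (indexed by the weight-$k/D$ lattice points), using the homogeneity of $h_d$ to track the basis-change matrices, and under $p\equiv 1\pmod D$ verify via Gauss-sum / Stickelberger computations that the Frobenius matrix is generically upper-triangular with unit diagonal --- giving generic ordinariness. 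A parallel construction on $C(F_{\text{top}})$ would leverage the Deligne condition on $h$ in the same way, reducing the analysis of $x_0^A h$ on the pyramid $C(F_{\text{top}})$ to a matching Hodge--Newton computation for the $d$-simplex polyhedron of $h$.

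\textbf{Main obstacle.} The principal difficulty is that on both $C(F_{\text{top}})$ and $C(F_{\text{main}})$ the restricted polynomial is supported on a proper subface of the simplex rather than only on the nonzero vertices, so Wan's diagonal local theorem does not apply off-the-shelf. The technical heart of the proof will be the explicit construction of a Hodge-filtered monomial basis of the chain-level Dwork complex on each of these two simplicial cones, and the verification --- under $p\equiv 1\pmod D$ --- that the Frobenius acts on this basis by a matrix which is upper-triangular modulo the $p$-adic maximal ideal with generically unit diagonal entries. This is exactly where the congruence $p\equiv 1\pmod D$ is used: it forces the relevant Gauss sums to be $p$-adic units via Stickelberger's theorem. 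A secondary bookkeeping task is to verify that $D(\Delta)$ equals the least common multiple of the denominators $D(F_{\text{top}})$ and $D(F_{\text{main}})$, so that a single congruence condition on $p$ simultaneously handles both faces.
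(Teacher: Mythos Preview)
The statement you were handed is the \emph{general} Adolphson--Sperber conjecture, which Wan showed is false for every ambient dimension $n\ge 4$; the paper does not prove it in general, only for the specific $(A,B)$-polytope $\Delta$ (Theorem~\ref{thm1}). Your proposal tacitly works in that $(A,B)$ setting, so I compare it to the paper's proof of Theorem~\ref{thm1}.

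Your opening moves---facial decomposition and the identification of the two relevant facets $F_{\mathrm{top}}=\delta_d'$ and $F_{\mathrm{main}}=\delta_d$---agree with the paper. The divergence is in how you treat $\delta_d$. You propose a direct Dwork-cohomological computation on the full simplicial cone $C(\delta_d)$: build a Hodge-filtered monomial basis, then check via Gauss-sum/Stickelberger estimates that Frobenius is generically upper-triangular with unit diagonal under $p\equiv 1\pmod D$. The paper does something quite different: it constructs a \emph{complete regular integral triangulation} of $\delta_d$ (Proposition~\ref{lem3}) into indecomposable simplices, shows each piece has normalized volume exactly $dB/s$ with $s=(A+B,d)$ (Proposition~\ref{lem1}), and then applies Wan's diagonal criterion (Proposition~\ref{prop2}) together with Le's regular subdivision theorem (Corollary~\ref{thm21}) to each piece.

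The gap in your plan is precisely the step you yourself flag as the ``main obstacle.'' Fu and Wan in \cite{FW} already carried out essentially the direct cohomological program you describe---exploiting the product structure $x_0^A h_d + c_B x_0^{-B}$ and Stickelberger---and the congruence they obtained is $p\equiv 1\pmod{[A,dB]}$, \emph{not} $p\equiv 1\pmod D$. When $s=(A+B,d)>1$ one has $D=[A,B,dB/s]$ a proper divisor of $[A,dB]$, so their result is strictly weaker than Theorem~\ref{thm1}; indeed the paper opens by noting that the present work is exactly the improvement Fu--Wan conjectured but could not reach by those methods. You give no mechanism by which your Frobenius computation on the big simplex would beat the Fu--Wan bound: the Stickelberger input on $C(\delta_d)$ naturally produces the modulus $dB$, not $dB/s$. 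The paper's triangulation is the new idea that closes this gap---cutting $\delta_d$ into pieces of volume $dB/s$ makes the diagonal criterion fire under the weaker congruence. Without an analogue of that triangulation (or some other genuinely new input), your approach as written would reprove \cite[Theorem~5.1]{FW} but not Theorem~\ref{thm1}.
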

This conjecture generalizes a conjecture of Dwork \cite{DW73} and Mazur \cite{MA}. Wan \cite{WD1,WD2} established several decomposition theorems for the generic Newton polygon and proved that the Adolphson--Sperber conjecture is true for $n\le 3$ but false for every higher dimension $n\ge 4$. He also proved that a weaker form of the A-S conjecture is true when we replace $D$ by another effectively computable positive integer $D^\ast$.

Le \cite{Le} generalized Wan's theory by giving the regular decomposition theorem and showing that the star decomposition, hyperplane decomposition and collapsing decomposition raised by Wan are  regular decompositions.
 In \cite{Le}, Le claimed the following result without proof: Let $\Delta_{\infty}(f)=\bigcup_i\Delta_i$ be a complete regular integral decomposition of $\Delta_{\infty}(f)$ and $f_i$ be the restriction of $f$ on $\Delta_i$. If each $f_i$ is generically $\Delta_i$-nondegenerate, then $f$ is also generically $\Delta_{\infty}(f)$-nondegenerate.
 It seems that this result is not evident. In Section 2, we will give the explicit proof by using some results of Gelfand, Kapranov and Zelevinsky.

A polynomial $h\in \mathbb{F}_q[x_1,\cdots,x_n]$ of degree $d>0$ is called a {\it Deligne polynomial} if its degree $d$ prime to $p$, and
its highest degree term defines a smooth projective hypersurface in  $\mathbb{P}^{n-1}$.
Let $A, B, d$ be positive integers relatively prime to $p$ and $f$ be an $(A, B)$-polynomial of the following form:
\begin{equation}\label{eqn27}
f(x_0,x_1,\cdots,x_n)=x_0^Ah(x_1,\cdots,x_n)+g(x_1,\cdots,x_n)+P_B(1/x_0)\in \mathbb{F}_q[x_0^{\pm},x_1,\ldots,x_n],
\end{equation}
where $h$ is a Deligne polynomial of degree $d$, $g$ is an arbitrary polynomial of degree $< dB/(A+B)$ and $P_B(y)$ is a one-variable polynomial of degree $\le B$. Set $\Delta=\Delta_{\infty}(f)$.
Let $\widetilde{\mathcal{M}}_p(\Delta)$ be the set of nondegenerate $(A,B)$-polynomials of form (\ref{eqn27})
and
$$\widetilde{{\rm GNP}}(\Delta,p):=\inf_{f\in \widetilde{\mathcal{M}}_p(\Delta)}{\rm NP}(f).$$
 Fu and Wan \cite{FW} proved that
if $p\equiv 1 \mod [A,dB]$, then $\widetilde{{\rm GNP}}(\Delta,p)={\rm HP}(\Delta)$.
In \cite[Remark 5.3]{FW}, Fu and Wan calculated that  $D(\Delta)=[A,B,dB/(A+B,dB)]$ and
conjectured that the Adolphson--Sperber conjecture is true for $\Delta$.
For $A=B=1$, Le \cite{Le} conjectured that
\begin{con} \label{con1} $p\equiv 1 \pmod D$ if and only if $\widetilde{{\rm GNP}}(\Delta,p)={\rm HP}(\Delta)$.
\end{con}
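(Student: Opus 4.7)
The plan is to split the biconditional and handle the two directions by rather different tools. The forward implication is essentially the Adolphson--Sperber conjecture for the polytope $\Delta$ attached to the $(A,B)$-family, which is the main theorem of this paper. The backward implication requires a $p$-adic obstruction argument of Stickelberger--Katz type, showing that when $p \not\equiv 1 \pmod D$ at least one slope of the Newton polygon is forced strictly above the corresponding Hodge slope for every admissible $f$.

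For the direction $p \equiv 1 \pmod D \Rightarrow \widetilde{\rm GNP}(\Delta,p)={\rm HP}(\Delta)$, I would invoke the regular decomposition of $\Delta$ developed by Wan and extended by Le, combined with the gluing statement for generic nondegeneracy proven in Section~2. Each piece of the decomposition---the Deligne hypersurface attached to $h$, the cone over the one-variable piece $P_B(1/x_0)$, and the interior block containing $g$---has local denominator dividing $D$, and each is known to be generically ordinary whenever $p$ is $1$ modulo its local denominator. The regular decomposition then assembles these ordinary blocks into $\widetilde{\rm GNP}(\Delta,p)={\rm HP}(\Delta)$.

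For the converse I argue by contrapositive. Assume $p \not\equiv 1 \pmod D$. By the Fu--Wan formula $D = [A,B,dB/\gcd(A+B,dB)]$, there is a facet $\delta$ of $\Delta$ not containing the origin whose local denominator $D(\delta)$ does not divide $p-1$. Hence there is a lattice point $u \in C(\Delta) \cap \mathbb{Z}^{n+1}$ with $w(u) = a/D(\delta)$ whose $p$-orbit on the quotient controlling Dwork cohomology has length $r \ge 2$. Applying Dwork's trace formula to any $f \in \widetilde{\mathcal{M}}_p(\Delta)$, the $q$-adic valuations of the reciprocal roots of $L^*(f,T)^{(-1)^n}$ are given, up to $p$-adic units, by averages of fractional parts along such $p$-orbits of lattice points. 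Whenever $w(u) \notin \mathbb{Z}$ and $r \ge 2$ this average strictly exceeds $w(u)$, so the matching Newton slope is strictly larger than the Hodge slope $w(u)$, proving ${\rm NP}(f) > {\rm HP}(\Delta)$ for every $f \in \widetilde{\mathcal{M}}_p(\Delta)$.

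The main obstacle is the converse direction, specifically the identification of a witnessing lattice point whose Newton-slope contribution survives every algebraic cancellation in the Dwork complex. The cone $C(\Delta)$ for the $(A,B)$-family is $(n+1)$-dimensional and blends contributions from three facets, so a na\"ive lattice-point choice may be washed out by cohomological boundaries. I plan to address this by adapting the strategy of Katz and Mazur for Fermat-type hypersurfaces: localise the obstruction at the offending facet $\delta$ whose local denominator fails to divide $p-1$, reduce to a quasi-homogeneous one-variable subproblem via the explicit shape of $f$, and apply a Gross--Koblitz type evaluation of the Gauss sums to pin down the shifted slope. Once the witness is secured, the comparison with ${\rm HP}(\Delta)$ reduces to a direct fractional-part inequality.
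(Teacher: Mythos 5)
The statement you are trying to prove is labeled a \emph{Conjecture} in the paper, and the paper neither proves it nor claims to: on the contrary, the text immediately following it records that Le verified it only for odd $d$ (with $A=B=1$) and that Li produced a \emph{counterexample} for even $d$. So Conjecture~\ref{con1} is known to be false, and any purported proof must contain an error. What the paper actually proves, as Theorem~\ref{thm1}, is a one-directional statement about a \emph{different} quantity: if $p\equiv 1 \pmod D$ then ${\rm GNP}(\Delta,p)={\rm HP}(\Delta)$, where the infimum runs over the full family $\mathcal{M}_p(\Delta)$ of $\Delta$-nondegenerate Laurent polynomials, not over the restricted family $\widetilde{\mathcal{M}}_p(\Delta)$.

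Your proposed forward direction is precisely where the gap lies. The regular/facial decomposition machinery (Theorem~\ref{thm8}, Corollary~\ref{thm21}) shows that there exists \emph{some} nondegenerate $f$ with $\Delta_\infty(f)=\Delta$ which is ordinary when $p\equiv 1\pmod D$; that establishes ${\rm GNP}(\Delta,p)={\rm HP}(\Delta)$. It does not establish $\widetilde{\rm GNP}(\Delta,p)={\rm HP}(\Delta)$, because the ordinary witnesses produced by the decomposition argument generically have nonzero coefficients on \emph{all} lattice points of $\Delta$, including those on the ``interior'' hyperplane $v_0=0$ with $\sum_{i\ge1}v_i = dB/(A+B)$; such a polynomial violates the constraint $\deg g < dB/(A+B)$ defining $\widetilde{\mathcal{M}}_p(\Delta)$, hence lies outside the restricted family. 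Since $\widetilde{\mathcal{M}}_p(\Delta)\subsetneq\mathcal{M}_p(\Delta)$ (the paper explicitly notes equality holds only after relaxing to $\deg g\le dB/(A+B)$), one has $\widetilde{\rm GNP}(\Delta,p)\ge{\rm GNP}(\Delta,p)$ and the inequality can be strict---this is exactly the content of Li's counterexample for even $d$.

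By contrast, the backward direction you spend most of your effort on is actually the easy one, and the Stickelberger/Gross--Koblitz machinery you invoke is unnecessary: from $\widetilde{\rm GNP}(\Delta,p)\ge{\rm GNP}(\Delta,p)\ge{\rm HP}(\Delta)$, equality $\widetilde{\rm GNP}(\Delta,p)={\rm HP}(\Delta)$ forces ${\rm GNP}(\Delta,p)={\rm HP}(\Delta)$, and the implication ${\rm GNP}(\Delta,p)={\rm HP}(\Delta)\Rightarrow p\equiv 1\pmod D$ is the well-known result of Wan that the paper cites from \cite[Section~1.3]{WD2}. In summary: the conjecture as stated is false, the nontrivial direction cannot be saved by the decomposition argument because of the family mismatch, and the direction you treat as hard is a two-line corollary of known results.
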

Moreover, Le proved that Conjecture \ref{con1} holds for odd $d$. Recently, Li \cite{Li} gave a counterexample of Conjecture \ref{con1} for even $d$.
Since the family considered by Le is slightly smaller than $\mathcal{M}_p(\Delta)$,
 Li \cite{Li} asked whether Conjecture \ref{con1} is true if the family $f$ becomes larger, i.e., with $\deg g \le\frac{d}{2}$ instead of $\deg g <\frac{d}{2}$. For such $f$, we have $\widetilde{\mathcal{M}}_p(\Delta)= \mathcal{M}_p(\Delta)$.
Hence $\widetilde{{\rm GNP}}(\Delta,p)={\rm GNP}(\Delta,p)$. It is well known that if ${\rm GNP}(\Delta,p)={\rm HP}(\Delta)$, then $p\equiv 1\pmod D$ (see \cite[Section 1.3]{WD2}).
Hence Li's question equals to ask that whether the Adolphson--Sperber conjecture holds for $\Delta$ when $A=B=1$.

Using Wan's facial decomposition theorem (Theorem \ref{thm8} below) and the regular decomposition theorem (Corollary \ref{thm21} below) generalized by Le, we prove the following:
\begin{thm}\label{thm1}
Suppose $f$ is given by (\ref{eqn27}) and $\Delta=\Delta_{\infty}(f)$. Then the Adolphson--Sperber conjecture is true for $\Delta$, i.e., if $p\equiv 1 \mod[A,B,dB/(A+B,dB)]$, then ${\rm GNP}(\Delta,p)={\rm HP}(\Delta)$.
\end{thm}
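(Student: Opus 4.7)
My plan is to combine Wan's facial decomposition theorem (Theorem~\ref{thm8}) with the regular decomposition corollary (Corollary~\ref{thm21}) generalized by Le, reducing the claim to the generic ordinariness of a small number of simpler sub-polytopes, each of which is either a known case (Deligne polynomials, a low-dimensional $(A,B)$-simplex) or can be dispatched by a direct Dwork-theoretic computation.

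The first step is to identify the facets of $\Delta=\Delta_{\infty}(f)$ not containing the origin. Because $h$ is Deligne of degree $d$ the support of its leading form spans the $(n-1)$-simplex with vertices $d\mathbf{e}_1,\dots,d\mathbf{e}_n$; because $P_B(1/x_0)$ contributes the lattice point $(-B,\mathbf{0})$; and because $\deg g<dB/(A+B)$, the support of $g$ lies strictly below the hyperplane through $(-B,\mathbf{0})$ and the top vertices $(A,d\mathbf{e}_i)$, so $g$ introduces no new facets. A direct verification then shows that $\Delta$ has exactly two facets missing the origin: the top simplex
\[
\tau_{\rm top}=\mathrm{conv}\{(A,\mathbf{0}),(A,d\mathbf{e}_1),\dots,(A,d\mathbf{e}_n)\},\quad D(\tau_{\rm top})=A,
\]
and the slanted simplex
\[
\tau_{\rm slant}=\mathrm{conv}\{(-B,\mathbf{0}),(A,d\mathbf{e}_1),\dots,(A,d\mathbf{e}_n)\},\quad D(\tau_{\rm slant})=\mathrm{lcm}\bigl(B,\,dB/(A+B,dB)\bigr),
\]
so that $D(\Delta)=\mathrm{lcm}(A,B,dB/(A+B,dB))$ is recovered as in \cite{FW}.

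Wan's facial decomposition now reduces Theorem~\ref{thm1} to showing that each cone $C(\tau_{\rm top})$ and $C(\tau_{\rm slant})$ is generically ordinary under the modular condition coming from its own denominator, both of which are implied by the hypothesis on $p$. For $C(\tau_{\rm top})$ the face polynomial is $x_0^{A}h(x_1,\dots,x_n)$ with $h$ Deligne; under $p\equiv 1\pmod A$ the $x_0$-direction splits off as an independent cyclic factor and the $n$-variable Deligne part is generically ordinary by the classical Deligne theorem. For $C(\tau_{\rm slant})$ I would invoke Corollary~\ref{thm21} to subdivide the $(n+1)$-simplex further---e.g.\ via a star decomposition anchored at $(-B,\mathbf{0})$ or a suitable hyperplane cut---so that every resulting piece is either a two-dimensional $(A,B)$-triangle in $(x_0,x_i)$ (generically ordinary by a direct Dwork-theoretic simplex computation under $p\equiv 1\pmod{D(\tau_{\rm slant})}$) or a Deligne piece in the remaining $x$-variables. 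Summing the Hodge polygons of the pieces and of the two cones recovers $\mathrm{HP}(\Delta)$, and the Section~2 result on generic nondegeneracy of regular decompositions guarantees that generic nondegeneracy glues back to the whole $\Delta$.

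The principal obstacle is the analysis of $C(\tau_{\rm slant})$: the negative $x_0^{-B}$-direction meets the Deligne top form of $h$ in a combinatorially mixed way, and a naive coordinate-plane decomposition either fails to be regular in Le's sense or yields pieces whose generic ordinariness only holds under the stronger Fu--Wan condition $p\equiv 1\pmod{[A,dB]}$ rather than the weaker $p\equiv 1\pmod{D(\tau_{\rm slant})}$. Choosing the regular decomposition so that every piece lands in an already-understood family, and verifying both its regularity and the improved modular condition on each piece, is the crux of Theorem~\ref{thm1} and precisely the improvement over \cite{FW}.
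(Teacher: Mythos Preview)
Your outline correctly identifies the two facets $\tau_{\rm top}=\delta_d'$ and $\tau_{\rm slant}=\delta_d$ and correctly reduces via Theorem~\ref{thm8} to treating each separately; the treatment of $\tau_{\rm top}$ is essentially the paper's (it simply quotes \cite[Theorem~7.5]{WD1}). The genuine gap is the slanted face, and you acknowledge this yourself in the final paragraph: you propose ``a star decomposition anchored at $(-B,\mathbf 0)$ or a suitable hyperplane cut'' yielding ``two-dimensional $(A,B)$-triangles'' or ``Deligne pieces'', but neither works. A star decomposition from $(-B,\mathbf 0)$ does nothing, since $\tau_{\rm slant}$ is already a simplex with $(-B,\mathbf 0)$ as a vertex; and a coordinate-plane decomposition into pieces of type ``$(x_0,x_i)$-triangle times Deligne block'' produces simplices of normalized volume divisible by $dB$, not $dB/(A+B,d)$, so the diagonal criterion (Proposition~\ref{prop2}) only fires under the stronger Fu--Wan modulus $[A,dB]$. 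In short, the proposal does not contain the idea that actually closes the gap between $[A,dB]$ and $[A,B,dB/(A+B,dB)]$.

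What the paper does for $\delta_d$ is a specific three-stage triangulation governed by $s=(A+B,d)$. First, $\delta_d$ is sliced by the $s-1$ hyperplanes through the intermediate lattice levels $\mathcal V_t$ ($t=1,\dots,s-1$) into prisms $\delta_d(t)$; this step is invisible if one does not notice that $s>1$ creates interior lattice points on $\delta_d$. Second, each $\delta_d(t)$ (a join of two parallel simplices $\mathcal S_{t-1},\mathcal S_t$) is cut by the staircase of hyperplanes $\mathcal H_t^i$ into joins $P(\hat\Gamma_t^i,\hat\Lambda_{t-1}^{n-i-1})$ of complementary faces. Third, each such join is refined by unit-coordinate hyperplanes into indecomposable simplices $P(\hat\alpha,\hat\beta)$. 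The crucial computation (Proposition~\ref{lem1}) is that every resulting simplex has $(n+1)!\,{\rm Vol}=dB/s$ exactly, so Proposition~\ref{prop2} applies under $p\equiv 1\pmod{dB/s}$. Regularity of the whole triangulation is checked via Lemma~\ref{lem4} and the intersection Lemma~\ref{lemma2}. None of these pieces is a ``two-dimensional triangle'' or a ``Deligne piece''; they genuinely mix the $x_0$-direction with the $x_i$-directions, and the volume drop from $dB$ to $dB/s$ comes precisely from the first slicing step that your proposal omits.
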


This paper is organized as follows. In Section 2, we will introduce Wan's decomposition theory and the regular decomposition theorem generalized by Le. In Section 3, we present the proof of Theorem \ref{thm1}.

\section{Decomposition theory}

Essentially, the idea of decomposition theory is to reduce from
the harder non-diagonal case to the easier diagonal one. So in this section we introduce the diagonal local theory first.
A more complete treatment of the theory can be found in \cite{WD2}.

A Laurent polynomial $f\in \mathbb{F}_q[x_1^{\pm},\cdots,x_n^{\pm}]$ is called {\it diagonal} if $f$ has exactly $n$ non-constant terms and $\Delta:=\Delta_{\infty}(f)$ is $n$-dimensional.
Write $f(x)=\sum_{j=1}^na_jx^{\mathbf v_j}$, where $a_j\in \mathbb{F}_q^{*}$ and each $\mathbf v_j$ is written as a column vector.
Then the square matrix of $\Delta$ is defined to be
$$\mathbf{M}(\Delta)=(\mathbf v_1,\cdots,\mathbf v_n).$$
 If $f$ is diagonal, then $\det\mathbf{M}(\Delta)\neq0$ in $\mathbb Z$. We have
\begin{prop}\label{diagonal}\cite[Section 2.1]{WD2}
$f$ is $\Delta_{\infty}(f)$-nondegenerate if and only if
$\gcd(p,\det \mathbf{M}(\Delta))=1$.
\end{prop}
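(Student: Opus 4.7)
The plan is to reduce $\Delta$-nondegeneracy for the diagonal polynomial $f=\sum_{j=1}^{n}a_{j}x^{\mathbf{v}_{j}}$ to linear algebra on the columns of $\mathbf{M}(\Delta)$ modulo $p$. Since $\det\mathbf{M}(\Delta)\neq 0$ over $\mathbb{Z}$, the $\mathbf{v}_j$ are $\mathbb{Q}$-linearly independent and $\Delta$ is an $n$-simplex with vertices $0,\mathbf{v}_1,\ldots,\mathbf{v}_n$. Its closed faces not containing the origin are exactly $\delta_S:=\mathrm{conv}\{\mathbf{v}_j:j\in S\}$ for nonempty $S\subseteq\{1,\ldots,n\}$. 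A direct computation gives
\[
x_i\frac{\partial f_{\delta_S}}{\partial x_i}=\sum_{j\in S}v_{j,i}\,a_j x^{\mathbf{v}_j},\qquad i=1,\ldots,n,
\]
so, setting $y_j:=a_j x^{\mathbf{v}_j}$, the Euler system for $\delta_S$ becomes the linear system $\mathbf{M}_S\,(y_j)_{j\in S}^{T}=0$, where $\mathbf{M}_S$ is the $n\times|S|$ submatrix of $\mathbf{M}(\Delta)$ with columns indexed by $S$. Crucially, $y_j\neq 0$ for every $j$ whenever $x\in(\bar{\mathbb{F}}_{q}^{*})^n$.

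For the ``if'' direction, assume $\gcd(p,\det\mathbf{M}(\Delta))=1$. Then the columns of $\mathbf{M}(\Delta)$ remain $\bar{\mathbb{F}}_q$-linearly independent, so every $\mathbf{M}_S$ has full column rank and the Euler system forces $y=0$, contradicting $y_j\neq 0$; hence $f$ is $\Delta$-nondegenerate. For the ``only if'' direction, suppose $p\mid\det\mathbf{M}(\Delta)$ and let $S$ index a \emph{minimal} $\bar{\mathbb{F}}_q$-linearly dependent subset of the columns. Minimality yields a relation $\sum_{j\in S}c_j\mathbf{v}_j\equiv 0\pmod{p}$ with every $c_j\in\bar{\mathbb{F}}_q^{*}$. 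To exhibit a common zero of the Euler system on $\delta_S$ I would prescribe $y_j=c_j$ and solve the monomial system $a_j x^{\mathbf{v}_j}=c_j$, $j\in S$, for $x\in(\bar{\mathbb{F}}_q^{*})^n$.

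The main obstacle is the solvability of this monomial system; the plan is to invoke the Smith normal form of $\mathbf{M}_S^{T}$. Since the columns of $\mathbf{M}_S$ are $\mathbb{Z}$-linearly independent, there exist $U\in\mathrm{GL}_{|S|}(\mathbb{Z})$, $V\in\mathrm{GL}_n(\mathbb{Z})$ and nonzero integers $d_1,\ldots,d_{|S|}$ with $\mathbf{M}_S^{T}=U\,[\mathrm{diag}(d_1,\ldots,d_{|S|})\mid 0]\,V$. Composing the monomial map $x\mapsto(x^{\mathbf{v}_j})_{j\in S}$ with the automorphism of $(\bar{\mathbb{F}}_q^{*})^n$ induced by $V$ on the source and the automorphism of $(\bar{\mathbb{F}}_q^{*})^{|S|}$ induced by $U$ on the target yields the map $(x_1,\ldots,x_n)\mapsto(x_1^{d_1},\ldots,x_{|S|}^{d_{|S|}})$. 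This map is surjective because $\bar{\mathbb{F}}_q^{*}$ is divisible: $d$-th powers are surjective for $\gcd(d,p)=1$ by algebraic closure, and for $p\mid d$ by surjectivity of the Frobenius. The resulting $x$ is then a common zero of the Euler system on $\delta_S$, contradicting nondegeneracy. Everything else amounts to enumerating the faces of the simplex and elementary linear algebra.
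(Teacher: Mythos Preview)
The paper does not give its own proof of this proposition; it is quoted directly from Wan \cite[Section 2.1]{WD2} without argument. Your proof is correct and is essentially the standard one found in that reference: the Euler system on each face $\delta_S$ is linear in the nonvanishing quantities $y_j=a_jx^{\mathbf{v}_j}$, so nondegeneracy on every $\delta_S$ is equivalent to every subfamily $\{\mathbf{v}_j:j\in S\}$ remaining $\bar{\mathbb F}_q$-independent, i.e.\ to $p\nmid\det\mathbf{M}(\Delta)$. For the converse you correctly pick a minimal dependent $S$ to force all coefficients $c_j\neq 0$, and your Smith-normal-form argument is exactly what is needed: since the $\mathbf{v}_j$ with $j\in S$ are $\mathbb Z$-independent, $\mathbf{M}_S^{T}$ has full row rank $|S|$, and divisibility of $\bar{\mathbb F}_q^{*}$ (using both algebraic closedness for $\gcd(d,p)=1$ and surjectivity of Frobenius for $p\mid d$) makes the monomial map $(\bar{\mathbb F}_q^{*})^n\to(\bar{\mathbb F}_q^{*})^{|S|}$ surjective.
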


Let $S(\Delta)$ be the solution set of the following linear system
$$\mathbf{M}(\Delta)\cdot(r_1,r_2,...,r_n)^t \equiv 0 \pmod 1,\quad r_i\in \mathbb{Q}\cap [0,1),$$
where $(r_1,r_2,...,r_n)^t$ means the transpose of $(r_1,r_2,...,r_n)$.
Then $S(\Delta)$ is an abelian group and its order is given by $|\det \mathbf{M}(\Delta)|$.
By the fundamental structure of finite abelian group, $S(\Delta)$ can be decomposed into a direct product of invariant factors, i.e.,
$$S(\Delta)=\bigoplus_{i=1}^n \mathbb{Z}/s_i(\Delta)\mathbb{Z},$$
where $s_i(\Delta)|s_{i+1}(\Delta)$ for $i=1,2,...,n-1$.
Wan \cite{WD2} proved the following criterion for ordinarity.
\begin{prop}\label{prop2} Suppose that  $f\in \mathbb{F}_q[x_1^{\pm},\cdots,x_n^{\pm}]$ is  $\Delta_{\infty}(f)$-nondegenerate and diagonal. Let $s_n(\Delta)$ be the largest invariant factor of $S(\Delta)$. If $p\equiv 1 \mod s_n(\Delta)$, then $f$ is ordinary.
\end{prop}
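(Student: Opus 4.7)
The plan is to realize the $L$-function as the characteristic polynomial of a Frobenius operator on Dwork's $p$-adic cohomology and then exploit the diagonal structure of $f$ to make that operator completely explicit. For nondegenerate $f$, the Adolphson--Sperber construction yields a finite-dimensional space of dimension $n!{\rm Vol}(\Delta)$ on which a Frobenius $\alpha$ acts with $L^{*}(f,T)^{(-1)^{n-1}} = \det(I - T\alpha)$, so the $q$-adic Newton polygon we want to control is the Newton polygon of this characteristic polynomial.

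First I would set up a canonical basis. Writing $f = \sum_{j=1}^{n} a_j x^{v_j}$ with $\mathbf{M}(\Delta) = (v_1, \ldots, v_n)$, every lattice point $u$ in the cone $C(\Delta)$ decomposes uniquely as $u = \mathbf{M}(\Delta)(r + m)$ with $r \in S(\Delta)$ and $m \in \mathbb{Z}_{\geq 0}^n$, because $S(\Delta)$ is by definition the set of coset representatives of $\mathbb{Z}^n$ inside $\mathbf{M}(\Delta)\mathbb{Z}^n$ with coordinates in $[0,1)$. Modulo the Dwork-theoretic relations induced by the operators $x_i \partial_{x_i}$, the monomial $x^u$ reduces to a scalar multiple of $x^{\mathbf{M}(\Delta) r}$, so one obtains a canonical basis $\{e_r\}_{r \in S(\Delta)}$ of the cohomology. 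The count is correct since $|S(\Delta)| = |\det \mathbf{M}(\Delta)| = n!{\rm Vol}(\Delta)$, and the weight of $e_r$ is $w(\mathbf{M}(\Delta) r) = r_1 + \cdots + r_n \in \frac{1}{D}\mathbb{Z}_{\geq 0}$.

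Next, I would compute $\alpha$ in this basis. By Dwork's trace formula using his splitting function $\theta(t) = \exp(\pi(t - t^p))$, the matrix entry $\alpha_{r', r}$ is nonzero only when $p r' \equiv r$ in $S(\Delta)$, so $\alpha$ permutes the basis (up to scalars) according to multiplication by $p$ on $S(\Delta)$. The hypothesis now enters cleanly: since $s_n(\Delta)$ is the exponent of the abelian group $S(\Delta) = \bigoplus_{i} \mathbb{Z}/s_i(\Delta)\mathbb{Z}$, the condition $p \equiv 1 \pmod{s_n(\Delta)}$ forces $pr = r$ in $S(\Delta)$ for every $r$. Hence $\alpha$ becomes \emph{diagonal} in the basis $\{e_r\}$, and
\begin{equation*}
L^{*}(f,T)^{(-1)^{n-1}} = \prod_{r \in S(\Delta)} (1 - \alpha_{r,r} T).
\end{equation*}

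The hard part will be the $p$-adic valuation of each diagonal entry. A Stickelberger-type calculation with Gauss sums, or equivalently the Gross--Koblitz formula applied to the Teichm\"uller lifts of $a_1, \ldots, a_n$ combined with the asymptotic expansion of $\theta$, should yield ${\rm ord}_q \, \alpha_{r, r} = r_1 + \cdots + r_n$. Once this is in hand, sorting the diagonal entries by valuation gives the slopes of ${\rm NP}(f)$; the multiplicity of the slope $k/D$ is the number of $r \in S(\Delta)$ with $\sum_i r_i = k/D$, which a short inclusion--exclusion identifies with $H_\Delta(k)$ from Definition \ref{def1}. Summing slopes with these multiplicities reproduces the vertices of ${\rm HP}(\Delta)$, so ${\rm NP}(f) = {\rm HP}(\Delta)$ and $f$ is ordinary. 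The entire $p$-adic difficulty is therefore concentrated in the Gauss-sum valuation step; the algebraic reduction to a diagonal matrix is a clean consequence of the congruence condition on $p$.
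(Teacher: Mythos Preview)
The paper does not actually prove Proposition~\ref{prop2}; it quotes the result from Wan \cite{WD2} (see Section~2.1 there), so there is no in-paper argument to compare against beyond the citation. Your sketch is correct and is essentially a compressed version of Wan's proof in \cite{WD2}: the basis $\{x^{\mathbf{M}(\Delta)r}\}_{r\in S(\Delta)}$ for the Dwork cohomology, the observation that the Frobenius permutes this basis according to multiplication by $p$ on $S(\Delta)$ (hence is diagonal exactly when $p\equiv 1 \pmod{s_n(\Delta)}$, the exponent of $S(\Delta)$), and the Stickelberger/Gross--Koblitz computation of ${\rm ord}_q\,\alpha_{r,r}=\sum_i r_i$ are all the ingredients Wan uses. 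The identification of $\#\{r\in S(\Delta):\sum_i r_i=k/D\}$ with $H_\Delta(k)$ is likewise the standard one, coming from the Koszul resolution that produces the alternating sum in Definition~\ref{def1}. One small caution: the Gauss-sum step you flag as ``the hard part'' really does require the hypothesis $p\equiv 1\pmod{s_n(\Delta)}$ a second time, not just to diagonalize $\alpha$ but to ensure that each $pr_i$ has fractional part equal to $r_i$, so that the Stickelberger valuation $\sum_i\{pr_i\}$ collapses to $\sum_i r_i$; you might make this explicit in a full write-up.
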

Note that $s_n(\Delta)|\det\mathbf{M}(\Delta)$. We conclude that if $p\equiv 1 \mod (\det\mathbf{M}(\Delta))$, then $f$ is ordinary.

 For the general case, we will describe the facial decomposition for the Newton polygon \cite{WD1} and the regular
decomposition theorem for the generic Newton polygon \cite{Le}.

\begin{thm}\label{thm8}\cite[Facial decomposition theorem]{WD1} Let $f$ be a Laurent polynomial with $n$ variables over $\mathbb{F}_q$.
Assume $\Delta_{\infty}(f)$ is $n$-dimensional and $\delta_1,\cdots,\delta_h$ are all the co-dimensional $1$ faces of $\Delta_{\infty}(f)$
which do not contain the origin.
For $1\le i\le h$, let $\Delta_i$ be the convex polytope of lattice points in $\delta_i$ and the origin. Then $f$ is $\Delta_{\infty}(f)$-nondegenerate and ordinary if and only if $f_{\delta_i}$ is $\Delta_i$-nondegenerate and ordinary
for $1\le i\le h$.
\end{thm}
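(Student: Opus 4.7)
The plan is to separate the nondegeneracy claim from the ordinarity claim, handling each via a decomposition argument keyed on the star of the origin through the facets $\delta_1,\dots,\delta_h$.

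The nondegeneracy equivalence is largely formal. The ``only if'' direction is immediate since each $\delta_i$ is a closed face of $\Delta_{\infty}(f)$ not containing the origin, so the nondegeneracy hypothesis on $f$ immediately gives $\Delta_i$-nondegeneracy of $f_{\delta_i}$. For the converse, any closed face $\tau$ of $\Delta_{\infty}(f)$ not containing the origin lies inside some facet $\delta_i$, and is therefore also a face of $\Delta_i$; the assumed $\Delta_i$-nondegeneracy of $f_{\delta_i}$ then forces the required Jacobian condition on $f_\tau=(f_{\delta_i})_\tau$.

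For the ordinarity equivalence I would first establish Hodge polygon additivity from the star decomposition of $\Delta$. The pyramids $\Delta_i$ over the facets $\delta_i$ cover $\Delta$ with only lower-dimensional overlaps, and on each open facet cone $C(\delta_i)$ the weight function $w$ for $\Delta$ agrees with the weight function for $\Delta_i$. An inclusion-exclusion count on lattice points then yields $H_\Delta(k)=\sum_{i=1}^h H_{\Delta_i}(k)$, and hence ${\rm HP}(\Delta)$ is the slope-sorted concatenation of the $h$ polygons ${\rm HP}(\Delta_i)$. On the Newton side, I would invoke the Adolphson--Sperber chain-level presentation of $L^*(f,T)^{(-1)^{n-1}}$ as the characteristic polynomial of a Frobenius operator $\alpha_f$ acting on a $p$-adic Banach space whose basis is indexed by lattice points in $C(\Delta)$. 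Partitioning this basis along the facet cones $C(\delta_i)$ puts $\alpha_f$ into a block form whose diagonal block on $C(\delta_i)$ is the Frobenius $\alpha_{f_{\delta_i}}$ attached to the facial polynomial, while the off-diagonal contributions sit $p$-adically strictly deeper and so cannot lower the Newton polygon. Consequently ${\rm NP}(f)$ lies weakly below the slope-sorted concatenation of the ${\rm NP}(f_{\delta_i})$.

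The proof then concludes by combining, for each $i$, the Adolphson--Sperber Hodge bound ${\rm NP}(f_{\delta_i})\ge {\rm HP}(\Delta_i)$ with the two decomposition inequalities just obtained. Writing $\mathrm{concat}_i$ for slope-sorted concatenation, we get the chain
$${\rm HP}(\Delta)=\mathrm{concat}_i\,{\rm HP}(\Delta_i)\le {\rm NP}(f)\le \mathrm{concat}_i\,{\rm NP}(f_{\delta_i}),$$
and this collapses to equality throughout precisely when each ${\rm NP}(f_{\delta_i})={\rm HP}(\Delta_i)$, which is the desired equivalence. The main obstacle is the block-triangular step for $\alpha_f$: one must pick a $p$-adic basis compatible both with the weight filtration and with the facet partition of $C(\Delta)$, and then quantify the $p$-adic depth of the off-diagonal blocks so that only the facial Frobenii control the Newton polygon. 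This is where the substance of Wan's original argument lies.
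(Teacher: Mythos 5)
The paper quotes this theorem verbatim from Wan \cite{WD1} and gives no proof of its own, so there is nothing internal to compare your argument against; I will assess it on its merits against Wan's original argument.

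Your treatment of nondegeneracy is correct and complete: every closed face of $\Delta_{\infty}(f)$ not containing the origin lies in some $\delta_i$ and is therefore a face of $\Delta_i$, so nondegeneracy passes up and down freely. The Hodge polygon additivity $\mathrm{HP}(\Delta)=\mathrm{concat}_i\,\mathrm{HP}(\Delta_i)$ is also a true and needed ingredient, though your one-sentence ``inclusion-exclusion'' gloss hides the real work: the cones $C(\delta_i)$ overlap in positive-dimensional subcones, and one has to check that the alternating-sum definition of $H_\Delta(k)$ kills these overlap contributions (equivalently, that $H$ counts lattice points in half-open fundamental parallelepipeds of the cones, which do tile $C(\Delta)$). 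That part can be fixed.

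The genuine gap is in the Newton polygon half of your squeeze. You assert that $\alpha_f$, in a basis partitioned by the facet cones $C(\delta_i)$, is block-triangular with diagonal blocks equal to $\alpha_{f_{\delta_i}}$ and $p$-adically deep off-diagonal entries, and deduce $\mathrm{NP}(f)\le \mathrm{concat}_i\,\mathrm{NP}(f_{\delta_i})$. Two problems. First, the basis cannot be cleanly partitioned: the cones $C(\delta_i)$ share entire facets, so there is no canonical assignment of boundary lattice points, and whatever choice you make breaks the symmetry. Second, and more seriously, even within a single cone the restriction of $\alpha_f$ is \emph{not} $\alpha_{f_{\delta_i}}$: the Dwork splitting function attached to $f$ involves all monomials of $f$, not merely those on $\delta_i$, so the ``diagonal block'' genuinely differs from the facial Frobenius. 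The correct mechanism in Wan's proof is not a Newton-polygon inequality at all; it is that the Frobenius matrix, reduced to its leading term along the weight filtration modulo $p$ (the ``chain level''), \emph{factors} as a product over facets, and ordinarity is detected by nonvanishing of certain truncated minors of that chain-level matrix. That nonvanishing criterion then factors, giving the iff directly, without ever asserting a global inequality between $\mathrm{NP}(f)$ and the concatenation of the $\mathrm{NP}(f_{\delta_i})$. Your proposed inequality is not known to hold for non-ordinary $f$, so even if the squeeze were formally valid it would prove only the ``if'' direction; the ``only if'' direction needs the chain-level factorization. You flag the block-triangular step as the main obstacle, which is honest, but as written it is a different and apparently weaker claim than what Wan actually establishes.
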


By facial decomposition theorem, to study the ordinarity of $f$, we may reduce to the case when $\Delta_\infty(f)$ has only one co-dimensional $1$ face $\delta$ not containing the origin. A decomposition of $\delta$ then gives a decomposition of $\Delta_\infty(f)$ by connecting with the origin.

\begin{defn}
Let $\delta$ be an $(n-1)$-dimensional convex polytope with $A$ being a finite subset of $\delta$
including all vertices of $\delta$. A {\it subdivision} of $(\delta, A)$ is a family
$\mathcal{F}=\{(\delta_i, A_i):i\in I\}$ of polytopes such that
\begin{itemize}
  \item each $A_i$ is a subset of $A\cap\delta_i$, $A_i$ contains all vertices of $\delta_i$ and $\dim \delta_i=n-1.$
  \item any $\delta_i\cap \delta_j$ is a face (possibly empty) of both $\delta_i$ and $\delta_j$,
  and $$A_i\cap (\delta_i\cap \delta_j)=A_j\cap (\delta_i\cap \delta_j). $$
  \item the union of all $\delta_i$ coincides with $\delta$.
\end{itemize}
\end{defn}
Each $\delta_i$ is called a cell of the subdivision $\mathcal{F}$.
In particular, if each $\delta_i$ is a simplex, then we call that $\mathcal{F}=\{(\delta_i, A_i):i\in I\}$
is a {\it triangulation} of $(\delta,A)$.
We say that a cell $\delta_i$ is \emph{indecomposable} if it is a simplex and contains no point in $A$ other than vertices.
An integral polytope is a convex polytope whose vertices are lattice points.
A subdivision $\mathcal{F}$ is called {\it integral} if each $\delta_i$ is an integral polytope. An integral subdivision
$\mathcal{F}$ is called \emph{complete} if each $\delta_i$ is indecomposable. Clearly, a complete
subdivision should be a triangulation.

In \cite{Le}, Le studied the regular subdivision which is defined as follows.
\begin{defn}(\cite[Section 4.2.2]{Le} or \cite[Section 10]{WD3})
A subdivision $\mathcal{F}=\{(\delta_i, A_i):i\in I\}$ of $(\delta,A)$ is called  \emph{regular} if
there is a piecewise linear function $\phi:\delta\to \mathbb{R}$ such that
\begin{enumerate}
  \item $\phi$ is concave, i.e. $\phi(\lambda x+(1-\lambda)x')\ge \lambda\phi(x)+(1-\lambda)\phi(x')$, for all $x,x'\in \delta$ and $0\le \lambda\le 1$;
    \item the domains of linearity of $\phi$ are precisely the $\delta_i$ for $i\in I$.
\end{enumerate}
\end{defn}

\begin{defn}\cite[Chapter 2]{DRS}
 Let $\mathcal{F}=\{(\delta_i, A_i):i\in I\}$ and $\mathcal{F}'=\{(\delta_j', A_j'): j\in J\}$
 be two subdivisions of $(\delta,A)$. We say that $\mathcal{F}$ is a {\it refinement} (resp. {\it regular refinement}) of $\mathcal{F}'$
 if the collection of $(\delta_i,A_i)$ such that $\delta_i\subseteq \delta_j'$ forms a subdivision (resp. regular subdivision) of $(\delta_j',A_j')$ for any $j\in J$.
\end{defn}
We have the following property for regular subdivision.

\begin{lem}\label{lem4}\cite[Lemma 2.3.16]{DRS}
Let $\mathcal{F}$ and $\mathcal{F}'$ be two subdivisions of $(\delta,A)$ such that
$\mathcal{F}$ is regular and $\mathcal{F}'$
is a regular refinement of $\mathcal{F}$. Then $\mathcal{F}'$ is also regular.
\end{lem}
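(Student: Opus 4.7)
The plan is to produce a single concave piecewise linear function $\Psi:\delta\to\mathbb{R}$ whose domains of linearity are exactly the cells of $\mathcal{F}'$, which by the definition of a regular subdivision will certify that $\mathcal{F}'$ is regular. As input I would take a concave PL function $\phi:\delta\to\mathbb{R}$ witnessing the regularity of $\mathcal{F}$, and, for each cell $\delta_i$ of $\mathcal{F}$, a concave PL function $\psi_i:\delta_i\to\mathbb{R}$ whose domains of linearity are the cells of $\mathcal{F}'$ lying in $\delta_i$; these $\psi_i$ exist by the hypothesis that $\mathcal{F}'$ is a regular refinement of $\mathcal{F}$. The ansatz is $\Psi:=\phi+t\psi$, where $\psi$ is a global PL function assembled from the $\psi_i$ and $t>0$ is a small parameter to be chosen at the end.

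The reason $\phi+t\psi$ should behave correctly is a two-scale argument. Within each cell $\delta_i$ the function $\phi$ is affine, so $\Psi|_{\delta_i}$ and $t\psi_i$ have the same convexity and the same domains of linearity, namely the cells of $\mathcal{F}'$ contained in $\delta_i$. Across a codimension-one face common to two cells $\delta_i,\delta_j$ of $\mathcal{F}$, the function $\phi$ has a strict concave kink: the one-sided normal derivatives differ by a nonzero linear functional in the concave direction. For all sufficiently small $t>0$ this kink dominates the perturbation $t\psi$, so $\Psi$ remains concave across the face and the face is preserved as a breaking locus. Combining the two observations, $\Psi$ is globally concave and its domains of linearity are exactly the intersections of cells of $\mathcal{F}$ with cells of $\mathcal{F}'$, which by the refinement property coincide with the cells of $\mathcal{F}'$.

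The main obstacle is the construction of a single, globally well-defined $\psi$: the local pieces $\psi_i$ may disagree on the common faces $\delta_i\cap\delta_j$, and their differences there are PL rather than affine, so one cannot correct them simply by adding affine functions to each $\psi_i$. I would bypass this by passing to height functions on the finite vertex set rather than working with PL functions on $\delta$ directly. Fix a height function $\omega:A\to\mathbb{R}$ realizing $\mathcal{F}$ and, for each $i$, a height function $\omega_i$ on the vertices of $\mathcal{F}'$ that lie in $\delta_i$ realizing $\mathcal{F}'|_{\delta_i}$. Define $\omega':A\to\mathbb{R}$ by choosing, for each $a\in A$, some cell $\delta_{i(a)}$ of $\mathcal{F}$ containing $a$ and setting $\omega'(a):=\omega_{i(a)}(a)$. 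The same two-scale principle now shows that for $t>0$ sufficiently small the perturbed height $\omega+t\omega'$ realizes $\mathcal{F}'$: the dominant term $\omega$ enforces the coarse subdivision $\mathcal{F}$, while within each cell $\delta_i$ the perturbation restricts to $\omega_i$ up to an additive affine function, which is invisible to the induced subdivision and so refines the coarse picture to $\mathcal{F}'|_{\delta_i}$.
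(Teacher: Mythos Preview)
The paper does not prove this lemma; it simply cites it from \cite[Lemma~2.3.16]{DRS}. So there is no in-paper argument to compare yours against, and your sketch follows the standard strategy used there.

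There is, however, a genuine gap. You correctly identify the obstacle in the PL-function formulation (the local $\psi_i$ need not agree on shared faces), but the height-function workaround does not actually remove it. With $\omega'(a):=\omega_{i(a)}(a)$, the restriction $\omega'|_{A_i}$ agrees with $\omega_i$ at points interior to $\delta_i$ but may differ at a boundary point $a\in\delta_i\cap\delta_j$ by $\omega_j(a)-\omega_i(a)$, and this discrepancy is \emph{not} the restriction of an affine function on $\delta_i$. Since $\omega$ is already affine on $\delta_i$, the subdivision of $(\delta_i,A_i)$ induced by $\omega+t\omega'$ coincides, for every $t>0$, with the one induced by $\omega'|_{A_i}$ alone; a wrong boundary value can therefore change that subdivision (raising it may create extra cells through $a$, lowering it may drop $a$ from the marked set), and no choice of $t$ repairs this.

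The fix is to choose the local data coherently before gluing. Since the set of height functions inducing a fixed regular subdivision is a relatively open cone, and since restricting a height function to a face induces the restricted subdivision, one can build a compatible family $\{\omega_i\}$ by induction over the face poset of $\mathcal{F}$: fix heights on the lowest-dimensional faces first, then extend cell by cell, at each step using the affine freedom and the openness of the cone to match the already-chosen boundary values. Once $\omega'|_{A_i}=\omega_i$ holds on the nose for every $i$, your two-scale argument goes through verbatim.
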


In the following, let $B$ be a set of lattice points in $\mathbb R^{n-1}$ and let $g=\sum_{\mathbf w\in B}b_{\mathbf w}y^{\mathbf w}\in \mathbb F_q[y_1^{\pm},\ldots,y_{n-1}^{\pm}]$ be the generic Laurent polynomial associated to $B$. The Newton polyhedron $\Delta(g)$ of $g$ is defined to be the convex hull of $B$ in $\mathbb R^{n-1}$. We assume $\text{dim }\Delta(g)=n-1$.
The {\it principle $B$-determinant} $E_{B}(g)$ of $g$ is defined to be the resultant of
$$\Big\{y_1\frac{\partial g}{\partial y_1},\cdots,y_{n-1}\frac{\partial g}{\partial y_{n-1}},g\Big\}.$$
Recall that (see \cite[Chapter 10]{GKZ}) $E_{B}(g)$ is a polynomial in variables
$\{b_{\mathbf w}\}_{\mathbf w\in B}$.
Let
$$E_{B}(g)=\sum_{k\in \mathbb{Z}^{|B|}}c_k\mathbf b^k,$$
where $|B|$ denotes the number of points in $B$ and $\mathbf b=(b_{\mathbf w})_{\mathbf w\in B}$. The Newton polyhedron of $E_{B}(g)$ is defined to be the convex hull of lattice points $k\in \mathbb{Z}^{|B|}$ such that $c_k\neq 0$. When $B=\{\mathbf v_1,\cdots,\mathbf v_n\}$ is the set of vertices of an $(n-1)$-dimensional simplex in $\mathbb R^{n-1}$ and $g=\sum_{i=1}^n b_iy^{\mathbf v_i}.$
Then
$$E_{B}(g)=\pm\big({\widetilde{\rm Vol}}(\Delta(g))b_1\cdots b_n\big)^{\widetilde{\rm Vol}(\Delta(g))},$$ where $\widetilde{\rm Vol}(\Delta(g))$ is $(n-1)!$ times the volume of $\Delta(g)$.
\begin{defn}
The Laurent polynomial $g$ is called {\it $\Delta(g)$-nondegenerate} if for each closed face $\delta$ of $\Delta (g)$, the system of polynomial equations
$$g_{\delta}=y_1\frac{\partial g_{\delta}}{\partial y_1}=...=y_{n-1}\frac{\partial g_{\delta}}{\partial y_{n-1}}=0$$
have no solutions in $\mathbb{\bar{F}}_q^{*},$
where $g_\delta=\sum_{\mathbf w\in B\cap \delta}b_{\mathbf w}y^{\mathbf w}.$
\end{defn}
One has the following explicit criterion to determine the $\Delta(g)$-nondegeneracy.
\begin{thm}\label{thm23} \cite[Chapter 10]{GKZ} The generic Laurent polynomial $g$ is  $\Delta(g)$-nondegenerate if and only if $E_{B}(g)\neq 0$.
\end{thm}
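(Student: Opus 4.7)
The plan is to derive this equivalence from the Prime Factorization Theorem of Gelfand, Kapranov, and Zelevinsky for the principal $B$-determinant (Chapter 10 of \cite{GKZ}). That theorem asserts an identity of the form
\[
E_{B}(g) \;=\; \pm\prod_{\Gamma\preceq\Delta(g)} \Delta_{B\cap \Gamma}(g_\Gamma)^{m_\Gamma},
\]
where $\Gamma$ runs over all faces of $\Delta(g)$ (including the polytope itself and the vertices), each $m_\Gamma$ is a positive integer determined by the combinatorics of $B\cap\Gamma$, and $\Delta_{B\cap\Gamma}$ is the $(B\cap\Gamma)$-discriminant viewed as a polynomial in the coefficients $\{b_{\mathbf w}\}_{\mathbf w\in B\cap \Gamma}$ of the face polynomial $g_\Gamma$. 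From this identity it is immediate that $E_B(g)\neq 0$ if and only if every factor $\Delta_{B\cap \Gamma}(g_\Gamma)$ is nonzero.

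Next I would unpack each face discriminant geometrically. By the definition of the sparse discriminant, $\Delta_{B\cap \Gamma}(g_\Gamma)=0$ iff the hypersurface $\{g_\Gamma=0\}$ has a singular point in the torus $(\mathbb{\bar{F}}_q^{*})^{n-1}$, i.e.\ a common solution $y^*$ with all $y_i^*\neq 0$ of
\[
g_\Gamma \;=\; \frac{\partial g_\Gamma}{\partial y_1} \;=\; \cdots \;=\; \frac{\partial g_\Gamma}{\partial y_{n-1}} \;=\; 0.
\]
Since multiplication by $y_i^*$ is invertible in the torus, this system is equivalent to the paper's nondegeneracy system $g_\Gamma = y_1\partial g_\Gamma/\partial y_1 = \cdots = y_{n-1}\partial g_\Gamma/\partial y_{n-1} = 0$ at the face $\Gamma$. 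Combining with the previous step yields $E_B(g)\neq 0$ iff for every face $\Gamma\preceq\Delta(g)$ this system has no torus solution, which is exactly the definition of $\Delta(g)$-nondegeneracy.

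The main obstacle is really packaged inside the Prime Factorization Theorem itself, which we import as a black box from GKZ and whose proof rests on the compactification of the $A$-determinant via toric varieties, the Cayley trick, and the identification of its irreducible components with the face discriminants through the secondary fan. The only subsidiary item that needs checking in our setting is a bookkeeping match between conventions: one should verify that every face of $\Delta(g)$ contributes with positive multiplicity (so that the top-dimensional stratum is covered by the full discriminant $\Delta_B$ and the $0$-dimensional strata reduce to monomials in a single coefficient), and that the passage between $\partial/\partial y_i$ and $y_i\partial/\partial y_i$ is harmless on the torus. Once these conventions are aligned, Steps 1 and 2 combine to give the desired equivalence.
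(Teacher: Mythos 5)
The paper does not prove this statement; it is imported verbatim from GKZ Chapter 10, so there is no internal argument to compare against. Your reconstruction via the Prime Factorization Theorem is a natural idea, but Step 2 asserts something that is not a definition, and the assertion hides the harder half of the equivalence. You claim that $\Delta_{B\cap\Gamma}(g_\Gamma)=0$ is \emph{equivalent} to $\{g_\Gamma=0\}$ having a singular point in the torus. By the GKZ definition, however, the sparse discriminant is a defining equation of the \emph{Zariski closure} of the locus of coefficient vectors whose polynomial has a torus singularity; it is not characterized pointwise by the singularity condition. From that definition you get for free only ``torus singularity of $g_\Gamma$ $\Rightarrow$ $\Delta_{B\cap\Gamma}(g_\Gamma)=0$'', hence that degeneracy forces $E_B(g)=0$. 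The other direction --- nondegeneracy of $g$ on every face forces every factor $\Delta_{B\cap\Gamma}(g_\Gamma)\neq 0$ --- does not follow, because a coefficient vector can lie on the boundary of $\nabla_{B\cap\Gamma}$ (a limit of singular polynomials whose singular points escape toward the boundary of the torus) while the polynomial itself has no torus singularity. Closing this requires an induction over the face lattice relating the boundary of $\nabla_{B\cap\Gamma}$ to discriminants of proper faces, and that is precisely what you defer as ``a bookkeeping match between conventions.'' It is not bookkeeping; it is the substance of the theorem.

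The route that avoids the issue is the one the paper's own definition of $E_B$ already sets up. Here $E_B(g)$ is \emph{defined} as the sparse (toric) resultant of the $n$-tuple $\{y_1\partial g/\partial y_1,\dots,y_{n-1}\partial g/\partial y_{n-1},g\}$. The fundamental vanishing criterion for the sparse resultant (GKZ Chapter 8) is exact, with no closure hedging: the resultant vanishes if and only if the system has a common zero on the associated projective toric variety $X_B$. Since $X_B$ is a disjoint union of torus orbits indexed by the faces $\Gamma$ of $\Delta(g)$, and the system restricted to the orbit of $\Gamma$ is precisely $\{y_i\partial g_\Gamma/\partial y_i,\ g_\Gamma\}$, one reads off directly that $E_B(g)\neq 0$ iff no face restriction has a common torus zero iff $g$ is $\Delta(g)$-nondegenerate. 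The Prime Factorization Theorem is a downstream refinement of this picture, not a shortcut to it, and your argument should be reorganized around the resultant criterion.
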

Gelfand, Kapranov and Zelevinsky showed the following result:
\begin{thm}\label{thm24}\cite[Chapter 10]{GKZ}
There is a one-to-one correspondence between the set of all regular triangulations of $(\Delta(g),B)$ and the set of vertices of the Newton polyhedron of $E_{B}(g)$.
If $\mathcal{F}=\{(\delta_i, B_i):i\in I\}$ is a complete regular triangulation of $(\Delta(g),B)$, then the corresponding term in $E_{B}(g)$ has the form
$$ \prod_{\delta_i}E_{B_i}(g_{\delta_i}),$$
where  $g_{\delta_i}=\sum_{\mathbf w\in B_i}b_{\mathbf w}  y^{\mathbf w}$ is the restriction of $g$ to $(\delta_i, B_i)$.
\end{thm}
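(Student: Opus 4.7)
The plan is to follow the Gelfand--Kapranov--Zelevinsky strategy via the \emph{secondary polytope} $\Sigma(B)$. By construction the vertices of $\Sigma(B)$ are indexed by regular triangulations $T=\{(\delta_i,B_i)\}_{i\in I}$ of $(\Delta(g),B)$: the vertex attached to $T$ is the characteristic vector $\varphi_T\in\mathbb{R}^{|B|}$ with $\varphi_T(\mathbf w)=\sum_{i:\,\mathbf w\in\delta_i}\widetilde{\mathrm{Vol}}(\delta_i)$. Thus the theorem reduces to showing that the Newton polyhedron $N(E_B)$ of $E_B(g)$ coincides with $\Sigma(B)$ up to translation, and that the monomial of $E_B(g)$ attached to the vertex $\varphi_T$ is exactly $\prod_i E_{B_i}(g_{\delta_i})$.

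First I would establish the bijection by tropicalizing the coefficients. Fix a generic concave piecewise-linear function $\varphi:\Delta(g)\to\mathbb{R}$ whose domains of linearity are precisely the cells of a regular triangulation $T$, and substitute $b_{\mathbf w}=t^{-\varphi(\mathbf w)}c_{\mathbf w}$ for an indeterminate $t$ and generic $c_{\mathbf w}\in\mathbb{F}_q^\ast$. Vertices of $N(E_B)$ correspond to linear functionals on $\mathbb{R}^{|B|}$ at which $E_B$ attains a unique monomial of extremal $t$-order, so identifying the leading term of $E_B(g)$ under this substitution identifies the vertex $k_T$ of $N(E_B)$ associated with $T$. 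Genericity of $c_{\mathbf w}$ ensures that distinct regular triangulations yield distinct leading monomials, making the map $T\mapsto k_T$ injective.

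Next, I would compute the leading term via a resultant factorization. Using the companion substitution $y_j\mapsto t^{-\lambda_j}y_j$ with $\lambda$ chosen so as to match the slopes of $\varphi$ on each cell $\delta_i$, the system $\{y_j\,\partial g/\partial y_j,\,g\}$ restricted to the toric chart corresponding to $\delta_i$ acquires $g_{\delta_i}$ as its leading part, while the remaining monomials become lower-order corrections in $t$. Invoking the Poisson-type product formula for sparse resultants (GKZ, Chapter~8) over the decomposition induced by $T$, one obtains
\[
E_B(g)\ =\ t^{-\langle k_T,\varphi\rangle}\cdot\prod_{i\in I} E_{B_i}(g_{\delta_i})\ +\ (\text{higher order in }t),
\]
which both identifies the vertex $\varphi_T$ (on noting $\langle k_T,\varphi\rangle$ agrees with the secondary polytope pairing) and exhibits the claimed factorized monomial. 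Surjectivity of $T\mapsto k_T$ follows because every vertex of $N(E_B)$ is selected by \emph{some} generic linear functional, hence by some generic concave $\varphi$, which in turn induces a regular triangulation by its domains of linearity.

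The main obstacle is the resultant factorization step, which is the algebraic heart of the GKZ decomposition: proving rigorously that the leading-order behavior of $E_B(g)$ under the height-function substitution factors as $\prod_i E_{B_i}(g_{\delta_i})$ requires the full machinery of sparse resultants together with the Cayley trick, with careful bookkeeping of normalizations to ensure that no extraneous scalar factors spoil the identification. Once this factorization is in hand, the bijection with regular triangulations, and the identification of the vertex-monomials, are combinatorial consequences of the convexity of $\varphi$.
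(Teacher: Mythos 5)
This statement is a black-box citation to \cite[Chapter 10]{GKZ}; the paper supplies no proof of its own, so there is no in-paper argument to compare your proposal against. What you have written is a faithful (if telegraphic) outline of the actual GKZ strategy: identify the Newton polytope of the principal $B$-determinant with the secondary polytope $\Sigma(B)$, index vertices of $\Sigma(B)$ by regular triangulations via the characteristic vector, extract the vertex monomial by a height-function (tropical) degeneration, and factor the leading term using the product formula for sparse resultants together with the Cayley trick. That is precisely the architecture of GKZ's proof, so the ``route'' is the standard one, not a genuinely different alternative.

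Two caveats. First, a small imprecision in the combinatorics: the GKZ characteristic vector of a triangulation $T$ at a point $\mathbf w\in B$ is $\varphi_T(\mathbf w)=\sum_{i} \widetilde{\mathrm{Vol}}(\delta_i)$ where the sum runs over cells $\delta_i$ having $\mathbf w$ \emph{as a vertex}, not over cells merely containing $\mathbf w$; for a general regular triangulation of $(\Delta(g),B)$ a point of $B$ can lie in a cell without being one of its vertices, and then your formula overcounts. (For the \emph{complete} triangulations used elsewhere in the paper the two readings agree, but the first half of Theorem \ref{thm24} is stated for all regular triangulations, so the distinction matters.) Second, you yourself identify the resultant factorization as ``the algebraic heart'' and leave it to the machinery of GKZ Chapter 8; that step carries essentially all of the content of the theorem, so what you have is a correct roadmap rather than a proof. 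Given that the paper only cites the result, this is a reasonable level of treatment, but it should not be mistaken for a self-contained argument.
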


Let $\Delta$ be an $n$-dimensional, integral, convex polytope in $\mathbb R^n$ which has only one co-dimensional $1$ face $\delta$ not containing the origin.
Let $A$ be the set of all the  lattice points on face $\delta$,
$$f(x)=\sum_{{\mathbf v}\in A}a_{\mathbf v}x^{\mathbf v}\in \mathbb F_q[x_1^{\pm }, x_2^{\pm },\cdots, x_n^{\pm }]$$
be the generic Laurent polynomial associated to $A$. We have $\Delta_\infty(f)=\Delta$. For a subdivision $\mathcal{F}=\{(\delta_i, A_i):i\in I\}$ of $\delta$, we always assume $A_i=\delta_i\cap A$. Thus we say $\{\delta_i\}_{i\in I}$ is a subdivision of $\delta$ for simplicity.

One can choose an affine transformation $T\in \text{GL}_n(\mathbb Z)$ on $\mathbb R^n$ such that
$T(A)$ lies on the hyperplane $v_n=e$ for some positive integer $e$. Let
$$f'(x):=\sum_{{\mathbf v}\in A}a_{\mathbf v} x^{T({\mathbf v})}=x_n^e g'(x_1,\ldots,x_{n-1}),$$
where $g'\in \mathbb F_q[x_1^{\pm},\ldots, x_{n-1}^{\pm}]$.
Then $f$ is $\Delta_\infty(f)$-nondegenerate if and only if $f'$ is $\Delta_\infty(f')$-nondegenerate.
When $p\nmid e$, one checks by definition that $f'$ is $\Delta_\infty(f')$-nondegenerate if and only if $g'$ is $\Delta(g')$-nondegenerate.

Let $\{\delta_i\}_{i\in I}$ be a complete regular triangulation of $\delta$. Under the affine transformation $T$, one derives a complete regular triangulation $\{\delta'_i\}_{i\in I}$ (resp. $\{\delta''_i\}_{i\in I}$) of $T(\delta)$ (resp. $\Delta(g')$).
Denote the restriction of $f$ (resp. $f'$, resp. $g'$) to $\delta_i$ (resp. $\delta'_i$, resp. $\delta''_i$) by $f_{i}$ (resp. $f'_{i}$, resp. $g'_{i}$) . Denote   the set of lattice points in $\Delta(g')$ by $B'$. Let $B'_i:=B'\cap \delta''_i$.

If each $f_{i}$ is $\Delta_{\infty}(f_{i})$-nondegenerate, then each $f'_{i}$ is $\Delta_{\infty}(f'_{i})$-nondegenerate.
Since $\delta'_i$ is indecomposable, one has $f'_{i}$ is diagonal. By Proposition \ref{diagonal}, one deduces $p\nmid e$. Hence $g_{i}'$ is $\Delta(g_{i}')$-nondegenerate. By Theorem \ref{thm23}, one has $E_{B'_i}(g'_{i})\ne 0$. It follows from Theorem \ref{thm24} that $E_{B'}(g')\ne 0$. That is, $g'$ is $\Delta(g')$-nondegenerate. Hence $f$ (resp. $f'$) is $\Delta_{\infty}(f)$ (resp. $\Delta_{\infty}(f')$)-nondegenerate.  So we obtain:
\begin{cor}\label{cor1}
If $\mathcal{F}=\{(\delta_i, A_i):i\in I\}$ is a complete regular triangulation of $(\Delta(f),A)$ such that each $f_{i}$ is $\Delta_{\infty}(f_{i})$-nondegenerate, then $f$ is
$\Delta_{\infty}(f)$-nondegenerate.
\end{cor}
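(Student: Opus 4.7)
The plan is to transfer the whole question, via an affine transformation $T\in \mathrm{GL}_n(\mathbb Z)$, from the Laurent polynomial $f$ in $n$ variables to an $(n-1)$-variable generic polynomial $g'$ on the face $\delta$. Since $\Delta$ has only one co-dimensional $1$ face $\delta$ away from the origin, nondegeneracy of $f$ is a condition imposed purely on that face. Choose $T$ so that $T(A)$ lies in the hyperplane $v_n = e$; then, as explained in the text, $f$ is $\Delta_\infty(f)$-nondegenerate iff $f'$ is $\Delta_\infty(f')$-nondegenerate, and under $p\nmid e$ this is in turn equivalent to $g'$ being $\Delta(g')$-nondegenerate. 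The triangulation $\{\delta_i\}$ of $\delta$ is carried by $T$ to a complete regular triangulation $\{\delta''_i\}$ of $\Delta(g')$, with each restriction $f_i$ corresponding to $g'_i$.

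Next I would exploit the word \emph{complete} in the triangulation: each $\delta'_i$ is an indecomposable simplex, so $f'_i$ has exactly $n$ monomial terms and is diagonal. By Proposition \ref{diagonal}, nondegeneracy of $f'_i$ forces $p\nmid \det \mathbf M(\Delta_\infty(f'_i))$, and in particular $p\nmid e$; this justifies the passage from $f'_i$-nondegenerate to $g'_i$-nondegenerate at the level of each cell. Applying Theorem \ref{thm23} to each cell gives $E_{B'_i}(g'_i) \neq 0$ for every $i\in I$.

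Now I would lift this cell-by-cell information to a global statement about $g'$ by invoking Theorem \ref{thm24}. Regularity of the triangulation means that the product $\prod_{i\in I} E_{B'_i}(g'_i)$ is precisely the monomial attached to a vertex of the Newton polyhedron of the principal $B'$-determinant $E_{B'}(g')$. Since each factor is nonzero, this vertex monomial is a nonzero term of $E_{B'}(g')$; hence $E_{B'}(g')\neq 0$, which by Theorem \ref{thm23} means $g'$ is $\Delta(g')$-nondegenerate. Unwinding the affine reduction then yields that $f'$, and therefore $f$, is $\Delta_\infty(f)$-nondegenerate.

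The main obstacle is purely conceptual rather than computational: one must check that the combinatorial correspondence given by Theorem \ref{thm24} really applies to the triangulation we obtain after the transformation $T$, i.e.\ that the cells $\delta''_i$ inherit regularity and completeness, and that the restrictions $g'_i$ are exactly the cell-polynomials to which the principal determinant formula applies. Once this dictionary between $f$-nondegeneracy on cells of $\delta$ and $g'$-nondegeneracy on cells of $\Delta(g')$ is in place, the key step is genuinely the factorization in Theorem \ref{thm24}, which is what converts ``regular triangulation'' into a single nonzero vertex monomial of $E_{B'}(g')$.
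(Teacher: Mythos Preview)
Your proposal is correct and follows essentially the same route as the paper: the affine reduction via $T\in\mathrm{GL}_n(\mathbb Z)$ to an $(n-1)$-variable polynomial $g'$, the use of Proposition~\ref{diagonal} on the diagonal restrictions $f'_i$ to extract $p\nmid e$, the criterion of Theorem~\ref{thm23} on each cell, and then the vertex-monomial factorization of Theorem~\ref{thm24} to conclude $E_{B'}(g')\neq 0$. The only difference is expository---you spell out a bit more explicitly why the product being a vertex monomial forces $E_{B'}(g')\neq 0$, and you flag the need to check that regularity and completeness survive the passage through $T$, which the paper leaves implicit.
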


Let $\phi:\delta \rightarrow \mathbb{R}$ be a continuous function.
We extend the domain of $\phi$ to $C(\Delta)$ by setting
$$\phi(\mathbf{r})=w(\mathbf{r})\phi\Big(\frac{\mathbf{r}}{w(\mathbf{r})}\Big).$$

\begin{defn} For $\mathbf{r}\in C(\Delta)$, we define
$$m(\phi,A;\mathbf{r}):=\sup \Big\{\sum_{\mathbf v\in A} l_{\mathbf v}\phi (\mathbf{v}): \sum_{\mathbf v\in A}l_{\mathbf v}\mathbf{v}=\mathbf{r}, l_{\mathbf v}\in \mathbb R_{\geq 0} \Big\}.$$
We call $\phi$ is homogeneous with respect to $A$ if
$$m(\phi,A;\mathbf{r})=\inf \Big\{\sum_{\mathbf v\in A} l_{\mathbf v}\phi (\mathbf{v}): \sum_{\mathbf v\in A}l_{\mathbf v}\mathbf{v}=\mathbf{r}, l_{\mathbf v}\in \mathbb R_{\geq 0} \Big\}$$
for all $\mathbf{r}\in C(\Delta)$.
\end{defn}

Let $\{\delta_i\}_{i\in I}$ be a regular integral subdivision of $\delta$ and $\phi$ be a piecewise linear function on $\delta$ corresponding to $\{\delta_i\}_{i\in I}$. Let $A_i=A\cap \delta_i$. One has that
$$\phi(\lambda \mathbf{u}+(1-\lambda)\mathbf{u}')=\lambda\phi(\mathbf{u})+(1-\lambda)\phi(\mathbf{u}')\ {\rm for}\ \mathbf{u},\mathbf{u}'\in \delta_i, 0\le \lambda \le 1.$$
For $i\in I$, denote by $C(\delta_i)$ the cone generated by $\delta_i$. For $\mathbf{r}, \mathbf{r'}\in C(\delta_i)$, $a\in \mathbb R_{\geq 0}$, using the property of weight function that
$w(\mathbf{r}+\mathbf{r'})=w(\mathbf{r})+w(\mathbf{r'})$, $w(a\mathbf r)=aw(\mathbf r)$, one has
\begin{align*}
\phi(\mathbf{r}+\mathbf{r'})&=\phi\Big(\frac{\mathbf{r}+\mathbf{r'}}{w(\mathbf{r}+\mathbf{r'})}\Big)\cdot w(\mathbf{r}+\mathbf{r'})\\
&=\phi\Big(\frac{w(\mathbf{r})}{w(\mathbf{r}+\mathbf{r'})}\frac{\mathbf{r}}{w(\mathbf{r})}+
\frac{w(\mathbf{r'})}{w(\mathbf{r}+\mathbf{r'})}\frac{\mathbf{r'}}{w(\mathbf{r'})}\Big)\cdot w(\mathbf{r}+\mathbf{r'})\\
&=w(\mathbf{r})\phi\Big(\frac{\mathbf{r}}{w(\mathbf{r})}\Big)+w(\mathbf{r'})\phi\Big(\frac{\mathbf{r'}}{w(\mathbf{r'})}\Big)
=\phi(\mathbf{r})+\phi(\mathbf{r'}),\\
\phi(a\mathbf r)&=w(a\mathbf r)\phi\Big(\frac{a\mathbf r}{w(a\mathbf r)}\Big)=aw(r)\phi\Big(\frac{\mathbf r}{w(\mathbf r)}\Big)=a\phi(\mathbf r).
\end{align*}
We conclude that $C(\delta_i)$($i\in I$) are precisely the domains of linearity of $\phi$.
Hence $m(\phi,A_i;\mathbf{r})=\phi(\mathbf{r})$ for any $\mathbf{r}\in C(\delta_i)$.
That is, $\phi|_{\delta_i}$ is homogeneous with respect to $A_i$.

Combining with \cite[Proposition 4.9, Theorem 5.1]{Le}, we obtain that
\begin{thm}\label{thm25}
Let $\{\delta_i\}_{i\in I}$ be a regular integral triangulatin of $\delta$. If $f$ is generically $\Delta_\infty(f)$-nondegenerate, $f_{i}$ is generically $\Delta_\infty(f_{i})$-nondegenerate and ordinary for each $i$, then $f$ is generically ordinary.
\end{thm}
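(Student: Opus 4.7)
The plan is to reduce the statement to Le's regular decomposition machinery by verifying the one combinatorial hypothesis that machinery requires, namely the homogeneity of an auxiliary piecewise linear function. In fact the preparatory work has already been carried out in the paragraph immediately preceding the theorem: from the regular integral triangulation $\{\delta_i\}_{i\in I}$ of $\delta$ one extracts a concave piecewise linear function $\phi:\delta\to\mathbb{R}$ whose domains of linearity are exactly the cells $\delta_i$, and extending $\phi$ to $C(\Delta)$ by $\phi(\mathbf{r})=w(\mathbf{r})\phi(\mathbf{r}/w(\mathbf{r}))$ yields $\phi(\mathbf{r}+\mathbf{r}')=\phi(\mathbf{r})+\phi(\mathbf{r}')$ and $\phi(a\mathbf{r})=a\phi(\mathbf{r})$ on each cone $C(\delta_i)$. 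These identities are precisely what is needed to conclude $m(\phi,A_i;\mathbf{r})=\phi(\mathbf{r})$ for all $\mathbf{r}\in C(\delta_i)$, i.e., that $\phi|_{\delta_i}$ is homogeneous with respect to $A_i$ in the sense of the definition given above.

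With this homogeneity in hand, the pair $(\{\delta_i\}_{i\in I},\phi)$ satisfies the hypotheses of \cite[Proposition 4.9]{Le}, which packages it into a regular decomposition of $f$ in Le's framework. I would then appeal directly to \cite[Theorem 5.1]{Le}, which asserts that for any such regular decomposition, generic nondegeneracy of $f$ together with generic nondegeneracy and ordinarity of every piece $f_i$ implies generic ordinarity of $f$. All three hypotheses are explicitly assumed in the statement of the theorem, so the conclusion is immediate.

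The step that requires real care, as opposed to a direct invocation of cited results, is the verification of homogeneity for $\phi$ on each $C(\delta_i)$. This rests on the additivity $w(\mathbf{r}+\mathbf{r}')=w(\mathbf{r})+w(\mathbf{r}')$ of the weight function along rays of $C(\delta_i)$, which in turn holds because $C(\delta_i)$ is the cone over a single cell of the regular subdivision and $\delta_i$ lies on the common face of $\Delta$ at distance $1$ from the origin. Once that identity is in place everything else is a formal translation into Le's setup, and no additional input beyond Corollary \ref{cor1} (for the nondegeneracy side, if needed for context) and \cite[Proposition 4.9, Theorem 5.1]{Le} is required.
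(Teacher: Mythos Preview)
Your proposal is correct and follows essentially the same approach as the paper: the paragraph preceding the theorem establishes that $\phi|_{\delta_i}$ is homogeneous with respect to $A_i$, and the paper then simply writes ``Combining with \cite[Proposition 4.9, Theorem 5.1]{Le}, we obtain that'' before stating the theorem. Your remark about Corollary~\ref{cor1} is superfluous here since generic nondegeneracy of $f$ is already a hypothesis, but this does not affect the argument.
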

By Corollary \ref{cor1} and Theorem \ref{thm25}, we have
\begin{cor}
\label{thm21}(Regular subdivision theorem, \cite[Theorem 4.3]{Le}) Let $\{\delta_i\}_{i\in I}$ be a complete regular integral triangulatin of $\delta$.
If each $f_{i}$ is generically $\Delta_\infty(f_{i})$-nondegenerate and ordinary, then $f$ is also generically $\Delta_\infty(f)$-nondegenerate
 and ordinary.
\end{cor}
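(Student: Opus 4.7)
The proof should be a direct concatenation of Corollary \ref{cor1} and Theorem \ref{thm25}, so the plan splits naturally into two steps: first upgrade Corollary \ref{cor1} from a pointwise to a generic statement, then feed the output into Theorem \ref{thm25}.

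For the first step, I would parametrize the coefficient space of $f$ by the affine space $\mathbb{A}^{|A|}$ over $\overline{\mathbb{F}}_p$ with coordinates $\{a_{\mathbf v}\}_{\mathbf v\in A}$. Because $A_i = A\cap \delta_i \subset A$, restriction to $\delta_i$ is a coordinate projection $\pi_i: \mathbb{A}^{|A|}\to \mathbb{A}^{|A_i|}$ sending $f$ to $f_i$. The hypothesis that each $f_i$ is generically $\Delta_\infty(f_i)$-nondegenerate means that its nondegenerate locus $U_i\subset \mathbb{A}^{|A_i|}$ is Zariski open and dense, so each preimage $\pi_i^{-1}(U_i)$ is Zariski open and dense in $\mathbb{A}^{|A|}$. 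Since the triangulation is finite, the intersection $U:=\bigcap_{i\in I}\pi_i^{-1}(U_i)$ is still Zariski open and dense. On $U$, all the $f_i$ are simultaneously $\Delta_\infty(f_i)$-nondegenerate, so Corollary \ref{cor1} applies and gives that $f$ is $\Delta_\infty(f)$-nondegenerate. This is precisely the assertion that $f$ is generically $\Delta_\infty(f)$-nondegenerate.

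For the second step, Theorem \ref{thm25} now applies verbatim: its two hypotheses — that $f$ is generically $\Delta_\infty(f)$-nondegenerate and that each $f_i$ is generically $\Delta_\infty(f_i)$-nondegenerate and ordinary — are exactly what we have in hand, the former from Step 1 and the latter from the assumption of the corollary. Its conclusion is that $f$ is generically ordinary, which combined with the generic nondegeneracy from Step 1 yields the statement of the corollary.

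The genuinely hard part of the argument is already wrapped up inside Theorem \ref{thm25}, whose proof rests on the piecewise linear homogeneity calculation carried out just above its statement together with \cite[Proposition 4.9, Theorem 5.1]{Le}. Once that is granted, the only remaining subtlety is the promotion of Corollary \ref{cor1} to a generic statement, and this is essentially automatic because $\Delta_\infty(f_i)$-nondegeneracy is a Zariski open condition in the coefficient space and because the regular triangulation is finite; no further combinatorial input is needed.
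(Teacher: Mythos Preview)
Your overall plan --- invoke Corollary~\ref{cor1} for generic nondegeneracy of $f$, then feed this into Theorem~\ref{thm25} --- is exactly the paper's proof, which consists of the single line ``By Corollary~\ref{cor1} and Theorem~\ref{thm25}, we have''.

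There is, however, a real gap in your Step~1. You read Corollary~\ref{cor1} as a \emph{pointwise} statement and then promote it via a Zariski-density argument, applying it at each closed point of your open set $U$. But Corollary~\ref{cor1} is already a generic statement: in the paragraph preceding it, $f$ is declared to be ``the generic Laurent polynomial associated to $A$'', and the proof only shows that $E_{B'}(g')$ is not identically zero as a polynomial in the coefficients (because one \emph{vertex} monomial of its Newton polytope equals $\prod_i E_{B'_i}(g'_i)\ne 0$). The pointwise version you invoke is in fact false. For a minimal example in $\mathbb{R}^2$, take $\delta$ with vertices $(2,0),(0,2)$ and midpoint $(1,1)$, so $f=a_{20}x_1^2+a_{11}x_1x_2+a_{02}x_2^2$; the two cells of the complete regular triangulation give diagonal $f_1,f_2$, which are nondegenerate as soon as $p\ne 2$ and all three coefficients are nonzero, yet $f$ itself is degenerate whenever $a_{11}^2=4a_{20}a_{02}$. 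So the sentence ``On $U$, all the $f_i$ are simultaneously $\Delta_\infty(f_i)$-nondegenerate, so Corollary~\ref{cor1} applies and gives that $f$ is $\Delta_\infty(f)$-nondegenerate'' does not go through.

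The repair is immediate and actually shortens your argument: drop the Zariski construction and apply Corollary~\ref{cor1} directly in its generic form. Its hypothesis (each $f_i$ generically nondegenerate) is contained in the hypothesis of Corollary~\ref{thm21}, and its conclusion ($f$ generically nondegenerate) is exactly the extra input Theorem~\ref{thm25} requires. Your Step~2 is then correct as written.
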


Let $H$ be a hyperplane passing through the origin such that the intersection of $\delta$ and $H$ is an integral polytope of co-dimension $2$. This hyperplane cuts the polytope $\Delta$ (resp. the face $\delta$) into two pieces. In \cite[Section 7]{WD1}, such decomposition of $\Delta$ is called a {\it hyperplane decomposition} .
In Section 3, we will use a series of hyperplane decompositions to obtain a complete regular integral triangulation of $(A,B)$-polytope.

\section{Proof of Theorem \ref{thm1}}

In this section, we will give a complete regular integral triangulation of $\Delta$ associated to the $(A,B)$-polynomials. In the end, using Corollary \ref{thm21},
we prove that Theorem \ref{thm1} is true.

Let $f$ be an $(A,B)$-polynomial of the form  (\ref{eqn27}).
Let $\mathbf{e}'_0,\mathbf e'_1,\cdots,\mathbf e'_n$ be the standard basis for $\mathbb{R}^{n+1}$.
Then $\Delta=\Delta_{\infty}(f)$ is the polytope spanned by the origin and the vectors $-B\mathbf e'_0,A\mathbf e'_0,A\mathbf e'_0+d\mathbf e'_1,\cdots,A\mathbf e'_0+d\mathbf e'_n$.

There are two co-dimensional $1$ faces of $\Delta$ not containing the origin:
the face $\delta_d$ spanned by $-B\mathbf e'_0,A\mathbf e'_0+d\mathbf e'_1,\cdots,A\mathbf e'_0+d\mathbf e'_n$
and the face $\delta'_d$ spanned by $A\mathbf e'_0,A\mathbf e'_0+d\mathbf e'_1,\cdots,A\mathbf e'_0+d\mathbf e'_n$.
The facial decomposition allows us to study the nondegeneracy and ordinarity on the faces $\delta_d$
 and $\delta_d'$.
 Let $\Delta_d$ (resp. $\Delta_d'$) be the convex hull of $\delta_d\cup 0$ (resp. $\delta_d'\cup 0$) in $\mathbb R^{n+1}$.

 It follows from \cite[Section 5]{FW} that
 \begin{eqnarray*}
D(\Delta_d')&=&A
\end{eqnarray*}
and
\begin{eqnarray*}
D(\Delta_d)&=&[B,dB/(A+B,dB)]=dB/(A+B,d).
\end{eqnarray*}
Let $s:=(A+B,d)$. If $s=1$, then $D(\Delta_d)=dB$. It follows from \cite[Theorem 5.1]{FW} that the Adolphson--Sperber conjecture
is true for  $\Delta$.
Hence we always assume that $s\ge 2$ in the following.
Wan \cite[Section 7]{WD1} has proved the Adolphson--Sperber conjecture holds for $\Delta_d'$.
By the facial decomposition theorem, it remains to show the Adolphson--Sperber conjecture is also true for $\Delta_d$.

Let $\mathcal{V}$ be the set of all the lattice points on $\delta_d$.
For $t=0,1,\ldots,s$,
let $$\mathcal V_t:=\Big\{(v_0,v_1,\cdots,v_{n})\in \mathbb{Z}_{\ge 0}^{n+1}: v_0=\frac{(A+B)t}{s}-B,\\ \sum_{i=1}^nv_i=\frac{dt}{s} \Big\}.$$
Then
$$\mathcal{V}=\bigcup_{t=0}^{s} \mathcal V_t.$$
Clearly, $\mathcal V_0=\{-B\mathbf{e}'_0\} $.
For $t=1,\ldots,s-1$, let $\mathcal{H}_t$ be the hyperplane passing through the origin and $\mathcal V_t$. Then $\mathcal{H}_t\cap \delta_d$ is a convex polytope of co-dimension $2$ spanned by $\mathcal V_t$. This gives a hyperplane decomposition of
$\Delta_d$. One has that $\{\mathcal H_t\}_{t=1}^{s-1}$ cuts $\delta_d$ into $s$ closed pieces $\delta_d(1),...,\delta_d(s)$, where  $\delta_d(t)$ is the convex hull of of $\mathcal V_{t-1}\cup\mathcal V_t$ for $t=1,\ldots, s-1$. Let $\phi_t$ be a concave piecewise linear function associated to $\mathcal H_t$. Then the function $\phi:=\sum_{t=1}^{s-1}\phi_t$ is concave and the domains of linearity of $\phi$ are exactly $\delta_d(t)$ for $t=1,\ldots,s$. It follows from that \cite[Example 2.2 in Chapter 7]{GKZ} that  $\{\mathcal H_t\}_{t=1}^{s-1}$ gives a regular subdivision of $\delta_d$:
\begin{eqnarray}\label{big}
\delta_d=\bigcup_{t=1}^s\delta_d(t).
\end{eqnarray}

If $n=1$, then $\delta_d(t)$ is a line segment containing only two lattice points $((A+B)(t-1)-sB)/s,d(t-1)/s)$ and $((A+B)t-sB)/s,dt/s)$  for $t\in \{1,\ldots,s\}$. Thus
$\delta_d=\bigcup_{t=1}^s\delta_d(t)$
is a complete regular triangulation of $\delta_d$.
Moreover,
\begin{align*}
\begin{vmatrix}
\frac{(A+B)(t-1)-sB}{s} & \frac{(A+B)t-sB}{s} \\\\
\frac{d(t-1)}{s}& \frac{dt}{s} \\
\end{vmatrix}=\pm\frac{dB}{s}.
\end{align*}
By Proposition \ref{diagonal} and Corollary \ref{thm21}, the Adolphson--Sperber conjecture is true for $\Delta_d$.
We assume $n\ge 2$ in what follows.

\begin{prop}\label{lem3} The polytope
$\delta_d$ has a complete regular triangulation.
\end{prop}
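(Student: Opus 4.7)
The plan is to refine the regular subdivision $\delta_d=\bigcup_{t=1}^s\delta_d(t)$ from (\ref{big}) into a complete regular triangulation by triangulating each piece $\delta_d(t)$ separately and then invoking Lemma \ref{lem4}. The coarse subdivision $\{\delta_d(t)\}_{t=1}^s$ is already regular, as witnessed by the concave piecewise-linear function $\phi=\sum_{t=1}^{s-1}\phi_t$ associated to the hyperplanes $\mathcal H_t$. If each $\delta_d(t)$ carries a complete regular triangulation $\mathcal F_t$, then $\bigcup_t\mathcal F_t$ is a regular refinement of a regular subdivision and therefore, by Lemma \ref{lem4}, itself a complete regular triangulation of $\delta_d$. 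The proposition thus reduces to producing a complete regular triangulation on each prism-like piece $\delta_d(t)=\mathrm{conv}(\mathcal V_{t-1}\cup\mathcal V_t)$.

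For each fixed $t$, I would apply a pulling triangulation of $\delta_d(t)$ with respect to an arbitrary ordering of \emph{all} lattice points of $\delta_d(t)$, as in \cite{DRS}. The pulling construction is classically known to yield a regular triangulation: an explicit concave piecewise-linear witness is obtained by assigning strictly decreasing heights to the lattice points in the chosen order and taking the upper envelope of the lifted point configuration. By construction, every lattice point of $\delta_d(t)$ becomes a vertex of $\mathcal F_t$, and in that situation each simplex of $\mathcal F_t$ is automatically indecomposable: a putative lattice point lying in a simplex $\sigma\in\mathcal F_t$ but not among its vertices would have to be a vertex of some adjacent cell and lie on the relative interior of a face of $\sigma$, violating the face-intersection axiom of a triangulation. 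Hence $\mathcal F_t$ is a complete regular triangulation of $\delta_d(t)$, as required.

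The main subtlety in this outline is verifying that the locally constructed triangulations glue to a globally regular triangulation of $\delta_d$. This is precisely the role of Lemma \ref{lem4}: one does not need to exhibit a single global concave piecewise-linear function whose domains of linearity are the final simplices; regularity piece-by-piece together with regularity of the coarse subdivision is enough. Once both inputs have been secured, the conclusion follows immediately, completing the proof of Proposition \ref{lem3}.
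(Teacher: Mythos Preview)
There is a genuine gap, and even once patched your route differs from the paper's in a way that matters downstream.

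\textbf{The gap.} You triangulate each slab $\delta_d(t)$ by pulling with respect to an ordering of its lattice points chosen \emph{independently} for each $t$. Adjacent slabs $\delta_d(t)$ and $\delta_d(t+1)$ share the facet $\mathrm{conv}(\mathcal V_t)$, which for $n\ge 2$ admits many distinct triangulations; with unrelated orderings the triangulations induced on this common facet need not agree, and then $\bigcup_t\mathcal F_t$ is not a subdivision of $(\delta_d,\mathcal V)$ at all. You flag the gluing step as the ``main subtlety'' and invoke Lemma~\ref{lem4}, but that lemma already \emph{assumes} that $\mathcal F'$ is a subdivision of $(\delta,A)$; it only upgrades a subdivision that is piecewise regular to a globally regular one. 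The repair is cheap---fix a single global ordering of $\mathcal V$ and restrict it to each slab (pulling is compatible with restriction to faces), or simply pull all of $\delta_d$ in one shot---and with it your argument does establish the bare existence claim of Proposition~\ref{lem3}.

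\textbf{Comparison with the paper.} The authors do not use an abstract pulling construction. They build an explicit triangulation: first cut each $\delta_d(t)$ into $n$ pieces $P(\hat\Gamma_t^i,\hat\Lambda_{t-1}^{n-i-1})$ by hyperplanes $\mathcal H_t^i$, then cut each such piece into simplices $P(\hat\alpha,\hat\beta)$ by further hyperplanes $\mathcal H_t^i(k,j)$ and $\mathcal P_t^i(l,m)$, verifying face-compatibility directly (Lemma~\ref{lemma2}). The payoff is that every resulting simplex, coned to the origin, has the \emph{same} normalized volume $dB/s$; this is computed in the proof of Proposition~\ref{lem1} and is exactly what, via Proposition~\ref{prop2} and Corollary~\ref{thm21}, yields generic ordinarity for $p\equiv 1\pmod{dB/s}$. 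A generic pulling triangulation carries no such volume control, so while your argument (once repaired) proves the literal statement of Proposition~\ref{lem3}, it would not supply the input needed for Proposition~\ref{lem1} and hence for Theorem~\ref{thm1}.
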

\begin{proof}
Let $L_d\subseteq\mathbb R^{n+1}$ be the hyperplane passing through $\delta_d$, and $\rho: L_d\to \mathbb R^n$ be the projection of $L_d$ onto its last $n$ coordinates, which is linear and one-to-one. We will first give complete regular triangulations of $\delta_d(1),\ldots,\delta_d(s)$ separately.

For $t=1,\cdots,s$, denote by $\tilde{\delta}_d(t)$ the image of $\delta_d(t)$ under the projection $\rho$. Given a subdivision (resp. regular subdivision) $\{\delta_{i,t}\}_{i\in I}$ of $\tilde{\delta}_d(t)$, if each $\rho^{-1}(\delta_{i,t})$ is an integral polytope, then $\{\rho^{-1}(\delta_{i,t})\}_{i\in I}$ is a  subdivision (resp. regular subdivision) of $\delta_d(t)$. This is because that if  $\phi_t:\tilde{\delta}_d
(t) \mapsto \mathbb{R}$ is a piecewise linear function whose linearity domains are exactly $\{\delta_{i,t}\}_{i\in I}$, then define $\hat{\phi}_t: \delta_d(t)\to \mathbb{R}$ by  $\hat{\phi}_t(v_0,\mathbf{v})=\phi(\mathbf{v})$. The domains of linearity of $\hat \phi_t$ are exactly $\{\rho^{-1}(\delta_{i,t})\}_{i\in I}$. Moreover, if $\phi_t$ is concave, then $\hat\phi_t$ is concave.
It follows from \cite[Example 2.2 in Chapter 7]{GKZ} that $\{\rho^{-1}(\delta_{i,t})\}_{i\in I}$ is a subdivision (resp. regular subdivision) of $\delta_d(t)$. In the following, for subsets $X,Y\subset \mathbb R^n$, let $P(X,Y)$ denote the convex hull of $X\cup Y$ in $\mathbb R^n$, $\hat X$ denote the set $\rho^{-1}(X)\in L^d$, and $P(\hat X, \hat Y)$ denote the convex hull of $\hat X\cup \hat Y$ in $\mathbb R^{n+1}.$

For $t=0,1,\ldots,s$, define
$$\mathcal S_t:=\{(v_1,\cdots,v_n)\in \mathbb{R}_{\ge 0}^n: v_1+\cdots+v_n=dt/s\}.$$
Then $\tilde{\delta}_d(t)$ is the convex hull of $\mathcal S_t\cup\mathcal S_{t-1}$ for $t=1,\ldots,s$.
Lattice points in $\mathcal S_t$ are projections of elements in $\mathcal V_t$. Let $\mathbf{e}_0,\mathbf e_1,\cdots,\mathbf e_n$ be the standard basis for $\mathbb{R}^{n}$.
For $1\le i\le n-1$, we define
$$\Gamma_t^i:=\{(v_1,\cdots,v_n)\in \mathcal S_t :v_k=0\ {\rm for\ each }\ 1\le k\le i\}.$$
It is an $(n-i-1)$-dimensional simplex with vertices $\frac{dt}{s}\mathbf e_{i+1},\frac{dt}{s}\mathbf e_{i+2},\ldots,\frac{dt}{s}\mathbf e_{n}.$
We have a flag of $\mathcal S_t$:
$$ \Gamma_t^{n-1}\subseteq\cdots \subseteq \Gamma_t^1\subseteq \Gamma_t^0:=\mathcal S_t.$$

For $1\le j\le n-1$, we define
$$\Lambda_t^j:=\{(v_1,\cdots,v_n)\in \mathcal S_t : v_k=0\ {\rm for }\ n-j+1\le k\le n\}. $$
It is an $(n-j-1)$-dimensional simplex with vertices $\frac{d(t-1)}{s}\mathbf e_1, \frac{d(t-1)}{s}\mathbf e_2,\ldots,\frac{d(t-1)}{s}\mathbf e_{n-j}.$
We have a flag of $\mathcal S_t$:
$$\Lambda_t^{n-1}\subseteq \cdots \subseteq\Lambda_t^{1}\subseteq \Lambda_t^0:=\mathcal S_t.$$

First, we will give a regular subdivision of $\tilde\delta_d(t)=P(\mathcal S_t, \mathcal S_{t-1})$ for $1\le t\le s$, which will be lifted to a regular subdivision of $\delta_d(t)$. When $t=1$, we take the trivial subdivision $\{P(\Gamma_1^0, \Lambda_0^{n-1})\}$ of $\tilde\delta_d(1)$. When $t\geq 2$, for $1\le i\le n-1,$ let $\mathcal H_t^i$ be the affine subspace generated by $\Gamma_t^{i}\cup\Lambda_{t-1}^{n-i}$.
By inputting the coordinates of vertices of $\Gamma_t^i$ and $\Lambda_{t-1}^{n-i}$, we calculate that the equation of $\mathcal H_t^i$ is given by
\begin{equation}\label{hyp}
    \frac{s}{d(t-1)}\sum_{j=1}^ix_j+\frac{s}{dt}\sum_{j=i+1}^nx_j=1.
\end{equation}
\begin{lem}\label{hyperplane}
We have $\mathcal H_t^i\cap P(\mathcal{S}_t, \mathcal S_{t-1})=P(\Gamma_t^i, \Lambda_{t-1}^{n-i}).$
\end{lem}

\begin{proof}
By definition, we have $$P(\mathcal{S}_t, \mathcal S_{t-1})=\big\{\mathbf v=(v_1,\cdots, v_n)\in \mathbb R^n_{\geq 0}: \frac{d(t-1)}{s}\le v_1+\cdots+v_n\le \frac{dt}{s}\big\}.$$
Given $\mathbf x=(x_1,\cdots,x_n)\in P(\mathcal{S}_t, \mathcal S_{t-1})$ satisfying (\ref{hyp}), let
$$\lambda_j=\frac{sx_j}{d(t-1)}\ {\rm for}\ j=1,\ldots,i, \ {\rm and}\ \lambda_{j}=\frac{sx_{j}}{dt}\ {\rm for}\ j=i+1,\ldots,n.
$$
We have $\lambda_j\geq 0$ for $j=1,\cdots,n$, $\sum_{j=1}^n\lambda_j=1,$
and

$$\mathbf x=\lambda_1\frac{d(t-1)}{s}\mathbf e_1+\ldots+\lambda_i\frac{d(t-1)}{s}\mathbf e_i+\lambda_{i+1}\frac{dt}{s}\mathbf e_{i+1}+\ldots+\lambda_n\frac{dt}{s}\mathbf e_n.$$

Hence $\mathbf x\in P(\Gamma_t^i, \Lambda_{t-1}^{n-i}).$ So $\mathcal H_t^i\cap P(\mathcal{S}_t, \mathcal S_{t-1})\subseteq P(\Gamma_t^i, \Lambda_{t-1}^{n-i}).$ Another direction is obvious. This proves Lemma \ref{hyperplane}.
\end{proof}
It follows from Lemma \ref{hyperplane} that the intersection of $\mathcal H_t^i$ and $P(\mathcal{S}_t, \mathcal S_{t-1})$ is an integral polytope of codimensional $1$. Hence $\mathcal H_t^i$ gives a hyperplane decomposition of $P(\mathcal{S}_t, \mathcal S_{t-1})$.
Moreover, one verifies that
$\mathcal S_t$ and $\mathcal S_{t-1}$
lie on the opposite side of $\mathcal H_t^i$. Hence $\mathcal H_t^i$ cuts $P(\mathcal{S}_t, \mathcal S_{t-1})$ into two polytopes:
$$P(\mathcal S_t,\Gamma_t^i\cup\Lambda_{t-1}^{n-i})=P(\Gamma_t^0, \Lambda_{t-1}^{n-i}), \text{ and } P(\Gamma_t^i\cup\Lambda_{t-1}^{n-i}, \mathcal S_{t-1})=P(\Gamma^i_t, \Lambda_{t-1}^0).$$

Suppose $i\ge 2$ and the hyperplanes $\{\mathcal H_t^j\}_{j=1}^{i-1}$ cut $P(\mathcal S_t, \mathcal S_{t-1})$ into $i$ polytopes: $P(\Gamma_t^{i-1}, \Lambda_{t-1}^0)$ and  $P(\Gamma_t^j,\Lambda_{t-1}^{n-j-1})$ for $j=0,\ldots,i-2$.
Note that
$$P(\Gamma_t^i, \Lambda_{t-1}^{n-i})\subseteq \mathcal{H}_t^i\cap P(\Gamma^{i-1}_t, \Lambda_{t-1}^0)\subseteq \mathcal H_t^i\cap P( \mathcal{S}_t, \mathcal S_{t-1})=P(\Gamma_t^i, \Lambda_{t-1}^{n-i}).$$
Then $P(\Gamma_t^i, \Lambda_{t-1}^{n-i})= \mathcal{H}_t^i\cap P(\Gamma^{i-1}_t, \Lambda_{t-1}^0).$ Since $\Gamma_t^{i-1}$ and $\Lambda_{t-1}^0$ lie on the opposite side of $\mathcal H_t^i$, one has that $\mathcal H_t^i$ cuts $P(\Gamma_t^{i-1}, \Lambda_{t-1}^0)$ into two parts:
$$P(\Gamma_t^i\cup\Lambda_{t-1}^{n-i},\Gamma_t^{i-1})=P(\Gamma_t^{i-1}, \Lambda_{t-1}^{n-i}),\text{ and } P(\Gamma_t^i\cup\Lambda_{t-1}^{n-i},\Lambda_{t-1}^0)=P(\Gamma_t^i,\Lambda_{t-1}^0).$$
For $j=0,\ldots,i-2$, $P(\Gamma_t^j,\Lambda_{t-1}^{n-j-1})$ is contained in $P(\Gamma_t^0, \Lambda_{t-1}^{n-i})$. Then one has that the hyperplanes $\{\mathcal H_t^j\}_{j=1}^{i}$ cut $P(\mathcal S_t,\mathcal S_{t-1})$ into $i+1$ polytopes: $P(\Gamma_t^{i}, \Lambda_{t-1}^0)$ and  $P(\Gamma_t^j,\Lambda_{t-1}^{n-j-1})$ for $i=0,\ldots,i-1$. By induction we can prove the series of hyperplanes $\{\mathcal H_t^i\}_{i=1}^{n-1}$ divide $\tilde{\delta}_d(t)$ into $n$ parts:
\begin{equation}\label{eqn13}
\tilde{\delta}_d(t)=\bigcup_{i=0}^{n-1}P(\Gamma_t^i,\Lambda_{t-1}^{n-i-1}),
\end{equation}
which is a regular subdivision by the previous paragraph.

We now show that (\ref{eqn13}) can be lifted to a regular subdivision of $\delta_d(t)$.
For $i=0,1,\cdots,n-1$, let
$$\hat{\Gamma}_t^{i}:=\rho^{-1}(\Gamma_t^i)=\{((A+B)t/s-B,\mathbf{v})\in \mathbb{R}^{n+1}: \mathbf{v}\in \Gamma_t^{i}\}$$
and
$$\hat{\Lambda}_{t-1}^{n-i-1}:=\rho^{-1}(\Lambda^{n-i-1})=\{((A+B)(t-1)/s-B,\mathbf{v})\in \mathbb{R}^{n+1}: \mathbf{v}\in \Lambda_{t-1}^{n-i-1}\}.$$
Then $$P(\hat{\Gamma}_t^i,\hat{\Lambda}_{t-1}^{n-i-1})=\rho^{-1}\big(P(\Gamma_t^i,\Lambda_{t-1}^{n-i-1})\big)$$ is an integral polytope.
Hence we get a regular subdivision of $\delta_d(t)$:
 \begin{equation}\label{eqn6}
 \delta_{d}(t)=\bigcup_{i=0}^{n-1}P(\hat{\Gamma}_t^i,\hat{\Lambda}_{t-1}^{n-i-1}).
 \end{equation}

Next, for $1\le t\le s$ and $0\le i\le n-1$, we will give a regular triangulation for each  $P(\Gamma_t^i,\Lambda_{t-1}^{n-i-1})$, which will be lifted to a regular triangulation of $P(\hat{\Gamma}_t^i,\hat{\Lambda}_{t-1}^{n-i-1})$.
For $i+1\le k\le n, 1\le j\le \frac{td}{s}-1$,
let $\mathcal H_t^{i}(k,j)$ be the affine subspace generated by $\Lambda_{t-1}^{n-i-1}$ and points on $\Gamma_t^i$
whose $k$-th coordinates are $j$.
 We will show that each $\mathcal H_t^{i}(k,j)$ is indeed a hyperplane by calculating its equation.
 Suppose that one equation of $\mathcal H_t^{i}(k,j)$ is given by
\begin{equation}\label{eqn10}
a^{(t,i)}_1x_1+\cdots+a^{(t,i)}_nx_n=1.
\end{equation}

By definition, the vertices $\frac{(t-1)d}{s}\mathbf e_1,\ldots,\frac{(t-1)d}{s}\mathbf e_{i+1}$ of $\Lambda_{t-1}^{n-i-1}$ lie on $\mathcal H_t^{i}(k,j)$.
 We have
 $$a^{(t,i)}_1=\cdots=a^{(t,i)}_{i+1}=\frac{s}{(t-1)d}.$$
If $k=i+1$, then the following $(n-i-1)$ points lie on $\mathcal H_t^{i}(k,j)$:
 $$(0,\cdots,0,j,\frac{td}{s}-j,0,\cdots,0),\ldots,(0,\cdots,0,j,0,\cdots,0,\frac{td}{s}-j),$$
where $j$ is located at each $(i+1)$-th coordinate.
We have that
$$\frac{sj}{(t-1)d}+a^{(t,i)}_{i+2}(\frac{td}{s}-j)=\cdots= \frac{sj}{(t-1)d}+a^{(t,i)}_{n}(\frac{td}{s}-j)=1.$$
Hence
$$a^{(t,i)}_{i+2}=\cdots=a^{(t,i)}_{n}=\frac{s((t-1)d-sj)}{d(t-1)(td-sj)}.$$

Suppose $k>i+1$. For $\tilde{k}\in \{i+2,\cdots,n\}\setminus\{k\}$, inputting the coordinates of points whose $k$-th coordinate is $j$, $\tilde k$-th coordinate is $\frac{td}{s}-j$ and other coordinates are zeros, we solve that
$$a_j^{(t,i)}=\frac{s}{(t-1)d} \text{ for } j\neq k \text{ and } a_k=\frac{sj-d}{(t-1)dj}.$$

The above calculation shows that $\mathcal H_t^{i}(k,j)$ is only determined by one linear equation. In other words, $\mathcal H_t^{i}(k,j)$ is a hyperplane in $\mathbb{R}^n$.

For $1\le l\le i+1, 1\le m\le \frac{(t-1)d}{s}-1$, let
$\mathcal P_t^{i}(l,m)$ be the affine subspace generated by $\Gamma_t^i$  and points in $\Lambda_{t-1}^{n-i-1}$ whose $l$-th coordinate is $m$. One calculates that $\mathcal P_t^{i}(l,m)$ is only determined by  one equation
\begin{equation}\label{bbb}
b^{(t,i)}_1x_1+\cdots+b^{(t,i)}_nx_n=1,
\end{equation}
whose coefficients are given as follows.
If $l=i+1$, then $$b^{(t,i)}_{i+1}=\cdots=b^{(t,i)}_{n}=\frac{s}{td}\ {\rm and}\  b^{(t,i)}_{1}=\cdots=b^{(t,i)}_{i}=\frac{s(td-sm)}{td((t-1)d-sm)}>b^{(t,i)}_{l}.$$
If  $l<i+1$, then
$$b^{(t,i)}_j=\frac{s}{td} \text{ for } j\neq l\ {\rm and}\ b^{(t,i)}_l=\frac{s}{td}+\frac{1}{tm}>b^{(t,i)}_j.
$$

The series of hyperplanes $$\big\{v_k=j\big\}_{i+1\le k\le n; 1\le j\le \frac{td}{s}-1}$$
divide $\Gamma_t^i$ into $(\frac{td}{s})^{n-i-1}$ unit simplices of dimension $(n-i-1)$. Denote the set of these $(\frac{td}{s})^{n-i-1}$ unit simplices by $\Upsilon_t^i$. The series of hyperplanes $$\big\{v_l=m\big\}_{1\le l \le i+1; 1\le m\le \frac{(t-1)d}{s}-1}$$ divide $\Lambda_{t-1}^{n-i-1}$ into $(\frac{(t-1)d}{s})^{i}$ unit simplices of dimension $(n-i-1)$.
Denote the set of these $(\frac{(t-1)d}{s})^{i}$ unit simplices by $\Phi_{t-1}^{n-i-1}$. We have
\begin{lem}\label{alphabeta}
$$P(\Gamma_t^i,\Lambda_{t-1}^{n-i-1})=\bigcup_{\alpha\in \Upsilon_t^i,\beta\in \Phi_{t-1}^{n-i-1}}P(\alpha,\beta)
.$$
\end{lem}
\begin{proof}
Suppose that  $\mathbf{v}_{1},\cdots,\mathbf{v}_{n-i}$ are vertices of $\Gamma_{t}^i$, and $\mathbf{v}_{n-i+1},\cdots,\mathbf{v}_{n+1}$ are vertices of $\Lambda_{t-1}^{n-i-1}$. For any $\mathbf{x}\in P(\Gamma_t^i,\Lambda_{t-1}^{n-i-1})$, we have
$$\mathbf{x}=\sum_{j=1}^{n-i}\lambda_j\mathbf{v}_j+\sum_{j=n-i+1}^{n+1}\lambda_j\mathbf{v}_j$$
for some $\lambda_j\geq 0$ such that $\sum_{j=1}^{n+1}\lambda_j=1$.
Let $\lambda=\lambda_1+\cdots+\lambda_{n-i}$,
$\mathbf{x}_1=\frac{1}{\lambda}\sum_{j=1}^{n-i}\lambda_j\mathbf{v}_j$ and $\mathbf{x}_2=\frac{1}{1-\lambda}\sum_{j=n-i+1}^{n+1}\lambda_j\mathbf{v}_j.$ We have $\mathbf x_1\in \Gamma_t^i$, $\mathbf x_2\in \Lambda_{t-1}^{n-i-1}$ and $\mathbf x=\lambda \mathbf x_1+(1-\lambda)\mathbf x_2.$ Choose $\alpha\in \Upsilon_t^i$, $\beta\in \Phi_{t-1}^{n-i-1}$ such that $\mathbf x_1\in \alpha$, $\mathbf x_2\in \beta$, we get $\mathbf x\in P(\alpha, \beta)$. Hence
$P(\Gamma_t^i,\Lambda_{t-1}^{n-i-1})\subseteq \bigcup_{\alpha\in \Upsilon_t^i,\beta\in \Phi_{t-1}^{n-i-1}}P(\alpha,\beta)$.
The other direction is evident.
This proves Lemma \ref{alphabeta}.
\end{proof}
Note that each $\alpha\in \Upsilon_t^i$ (resp. $\beta \in \Phi_{t-1}^{n-i-1}$) lies on one side of each hyperplane $\{v_k=j\}$ (resp. $\{v_l=m\}$). Hence $P(\alpha, \beta)$ lies on one side of each hyperplane $\mathcal H_t^{i}(k,j)$ (resp.  $\mathcal P_t^{i}(l,m)$).
For different pairs $(\alpha, \beta)$ and $(\alpha', \beta')$,
if $\alpha\neq \alpha'$, then we choose $(k,j)$ such that $\alpha$ and $\alpha'$ lie on the opposite sides of hyperplane $\{v_k=j\}$ in $\Gamma_t^i$, which means that $P(\alpha,
\beta)$ and $P(\alpha',
\beta')$ lie on the opposite sides of $\mathcal{H}_t^{i}(k,j)$.
If $\beta\neq \beta'$, then we can similarly choose $\mathcal P_t^{i}(l,m)$ such that $P(\alpha, \beta)$ and $P(\alpha',
\beta')$ lie on the opposite sides of $\mathcal P_t^{i}(l,m)$.

Hence the hyperplanes $$\Big\{\mathcal H_t^{i}(k,j), \mathcal P_t^{i}(l,m)\Big\}_{\substack{i+1\le k\le n;1\le j\le td/s-1\\ 1\le l\le i+1; 1\le m\le (t-1)d/s-1}}$$ divide $P(\Gamma_t^i, \Lambda_{t-1}^{n-i-1})$ into a regular subdivision (see Figure 1. for the case $A=B=1$, $d=2$ and $n=3$):
\begin{equation*}
P(\Gamma_t^i,\Lambda_{t-1}^{n-i-1})=\bigcup_{\alpha\in \Upsilon_t^i,\beta\in \Phi_{t-1}^{n-i-1}}P(\alpha,\beta).
\end{equation*}
 \begin{figure}[t]
\label{Fig8}
\begin{center}
\begin{tikzpicture}
\draw [fill=black](1,0,0) circle (0.05cm);
\draw [fill=black](0,1,0) circle (0.05cm);
\draw [fill=black](0,0,1) circle (0.05cm);
 \node (n1) at (1.2,0,0) {$(1,0,0)$};
 \node (n1) at (0,1.4,0) {$(0,1,0)$};
 \node (n1) at (0,0,1.6) {$(0,0,1)$};
\draw [fill=black](4,0,0) circle (0.05cm);
\draw [fill=black](2,2,0) circle (0.05cm);
\draw [fill=black](2,0,2) circle (0.05cm);
\draw [fill=black](3,0,1) circle (0.05cm);
\draw [fill=black](3,1,0) circle (0.05cm);
\draw [fill=black](2,1,1) circle (0.05cm);
\node (n1) at (4.6,0,0) {$(2,0,0)$};
\node(n1) at (2,-0.5,2) {$(0,0,2)$};
\node(n1) at (3,2.5,1) {$(0,2,0)$};
\draw [thick, dashed](0,0,1)--(1,0,0);
\draw [black](0,0,1)--(0,1,0);
\draw [thick, dashed](1,0,0)--(0,1,0);
\draw [black](2,0,2)--(4,0,0);
\draw [black](2,0,2)--(2,2,0);
\draw [black](4,0,0)--(2,2,0);
\draw [black](0,1,0)--(2,0,2);
\draw [black](0,1,0)--(2,2,0);
\begin{scope}[fill opacity=.4, draw opacity=.5, text opacity=1]
 \draw [fill=yellow](0,1,0)--(2,0,2)--(4,0,0)-- cycle;
  \draw  [thick, dotted](0,1,0)--(3,1,0);
   \draw  [black](0,1,0)--(2,1,1);
   \draw  [black](2,1,1)--(3,1,0);
  \draw  [thick, dotted](0,1,0)--(3,0,1);
      \draw  [black](2,1,1)--(3,0,1);
    \draw  [black](3,1,0)--(3,0,1);
      \draw  [black](0,0,1)--(2,0,2);
       \draw  [thick, dashed](1,0,0)--(4,0,0);
          \draw  [thick, dotted](0,0,1)--(3,0,1);
  \end{scope}
  \node (n1) at (2.5,1.6,0.5) {$\Gamma$};
    \node (n2) at (0.2,0.4,0.3) {$\Lambda$};
      \draw [->, shorten >=5mm, shorten <=5mm] (2.2,0.5,1.5) [bend right] to (3,-1.2,1.5) node at (3.3,-1,1.5) { $\mathcal H_2^1$ };
        \draw [->, shorten >=5mm, shorten <=5mm] (0.3,0.3,0.3) [bend left] to (-1.3,0.3,1.5) node at (-1.3,0.3,1.5) { $\mathcal H_2^2$ };
  \begin{scope}[fill opacity=.2, draw opacity=.3, text opacity=1]
   \draw [fill=blue](0,1,0)--(0,0,1)--(4,0,0)-- cycle;
     \end{scope}
 \end{tikzpicture}
\end{center}
\caption{ A complete regular subdivision of $\tilde\delta_{d}(2)$ when $A=B=1$, $d=2$ and $n=3$}
 \end{figure}
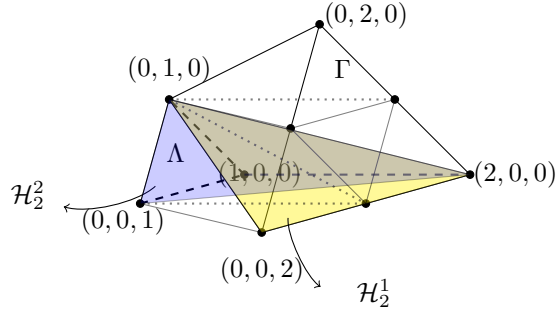

For $0\le i\le n-1, \alpha\in \Upsilon_t^i, \beta \in \Phi_{t-1}^{n-i-1},$ let
$$\hat\alpha:=\rho^{-1}(\alpha)=\Big\{\Big(\frac{t(A+B)}{s}-B,\mathbf v\Big):\mathbf v\in \alpha\Big\}$$ and $$\hat\beta:=\rho^{-1}(\beta)=\Big\{\Big(\frac{(t-1)(A+B)}{s}-B,\mathbf v\Big):
\mathbf v\in \beta \Big\}.$$
Let $P(\hat\alpha, \hat\beta)$ be the convex hull of $\hat\alpha$ and $\hat\beta$. Then $P(\hat\alpha, \hat\beta)=\rho^{-1}\big(P(\alpha, \beta)\big)$ is an integral polytope. Hence we get a regular subdivision (see Figure 2. for the case $A=2$, $B=1$, $d=6$ and $n=2$)
\begin{equation}\label{eqnn7}
P(\hat{\Gamma}_t^i,\hat{\Lambda}_{t-1}^{n-i-1})=\bigcup_{\alpha\in \Upsilon_t^i, \beta\in\Phi_{t-1}^{n-i-1}} P(\hat{\alpha},\hat{\beta}).
\end{equation}

\begin{figure}[t]
\label{Fig7}
\begin{center}
\begin{tikzpicture}[scale=0.8]
  \begin{scope}[fill opacity=.2, draw opacity=.3, text opacity=1]
 \draw [fill=yellow](0,0,-2)--(6,0,-2)--(0,6,-2)-- cycle;
 \end{scope}
\draw [fill=black](0,0,1) circle (0.05cm) node[left]{(-1,0,0)};
\draw [fill=black](0,0,0) circle (0.05cm);
\draw [fill=black](0,0,-1) circle (0.05cm);
\draw [fill=black](0,0,-2) circle (0.05cm) node[left]{(2,0,0)};
\draw [fill=black](2,0,0) circle (0.05cm)
node[below]{(0,0,2)};
\draw [fill=black](4,0,-1) circle (0.05cm)
node[below]{(1,0,4)};
\draw [fill=black](6,0,-2) circle (0.05cm)
node[right]{(2,0,6)};

\draw [fill=black](1,1,0) circle (0.05cm);
\draw [fill=black](0,2,0) circle (0.05cm)
node[left]{(0,2,0)};
\draw [fill=black](0,4,-1) circle (0.05cm)
node[left]{(1,4,0)};
\draw [fill=black](0,6,-2) circle (0.05cm)
node[left]{(2,6,0)};

\draw [fill=black](3,1,-1) circle (0.05cm);
\draw [fill=black](2,2,-1) circle (0.05cm);
\draw [fill=black](1,3,-1) circle (0.05cm);

\draw [fill=black](5,1,-2) circle (0.05cm);
\draw [fill=black](4,2,-2) circle (0.05cm);
\draw [fill=black](3,3,-2) circle (0.05cm);
\draw [fill=black](2,4,-2) circle (0.05cm);
\draw [fill=black](1,5,-2) circle (0.05cm);

\draw[thick,-](4,0,-1)--(0,4,-1);
\draw[thick,-](6,0,-2)--(0,6,-2);
\draw[thick,-](6,0,-2)--(0,0,1);
\draw[thick,-](0,6,-2)--(0,0,1);
\draw[ultra thin](0,0,-2)--(0,0,1);
\draw[thick,-](2,0,0)--(0,2,0);

\draw[thick,-](4,0,-1)--(0,2,0);
\draw[dashed](3,1,-1)--(0,2,0);
\draw[dashed](4,0,-1)--(1,1,0);
\draw[dashed](2,2,-1)--(0,2,0);
\draw[dashed](1,3,-1)--(0,2,0);

 \draw [->, shorten >=5mm, shorten <=5mm] (1,4,-1.5) [bend left] to (-1,5,-2) node at (-1,5,-2) { $\delta_d(3)$ };

 \draw [dashed,->, shorten >=5mm, shorten <=5mm] (0.5,3.5,-2.5) [bend left] to (2.5,5,-2) node at (2.5,5,-2) { $\delta'_d$ };

 \draw [->, shorten >=5mm, shorten <=5mm] (1,0,-1) [bend right] to (1,-1,1) node at (1,-1,1) { $\delta_d(1)$ };

 \draw [->, shorten >=5mm, shorten <=5mm] (2.5,0.5,-0.4) [bend left] to (4,2,-1) node at (4,2,-1) { $P(\hat\alpha, \hat\Lambda_1^1)$ };
  \begin{scope}[fill opacity=.2, draw opacity=.3, text opacity=1]
 \draw [fill=blue](0,2,0)--(4,0,-1)--(3,1,-1)-- cycle;
 \end{scope}
  \begin{scope}[fill opacity=.2, draw opacity=.3, text opacity=1]
 \draw [fill=blue](1,1,0)--(4,0,-1)--(2,0,0)-- cycle;
 \end{scope}

\draw [->, shorten >=5mm, shorten <=5mm] (2,0.5,-0.5) [bend left] to (4,0,2) node at (4,0,2) { $P(\hat\Gamma_2^1, \hat\beta)$ };
\draw [->, shorten >=5mm, shorten <=5mm] (1.5,2.5,-1) [bend left] to (-1,3,-1) node at (-1,3,-1) { $\delta_d(2)$ };
 \end{tikzpicture}
\end{center}
\caption{ A complete regular subdivision of $\delta_d(2)$ when $A=2, B=1$, $d=6$ and $n=2$ }
 \end{figure}

Combining (\ref{big}), (\ref{eqn6}) and (\ref{eqnn7}), we have a decomposition:
\begin{equation}\label{eqn8}
\delta_d=\bigcup_{t=1}^s\bigcup_{i=0}^{n-1}\bigcup_{\alpha\in \Upsilon_t^i, \beta\in\Phi_{t-1}^{n-i-1}} P(\hat{\alpha},\hat{\beta}).
\end{equation}
 We have proved that (\ref{big}) is a regular subdivision of $\delta_d$, (\ref{eqn6}) is a regular subdivision of $\delta_d(t)$ and (\ref{eqnn7})
 is a regular subdivision of $P(\hat{\Gamma}_t^i,\hat{\Lambda}_{t-1}^{n-i-1})$.
It follows from Lemma \ref{lem4} that to prove (\ref{eqn8}) is a regular subdivision, it suffices to prove
(\ref{eqn8}) is a subdivision, and this follows from Lemma \ref{lemma2}.
\end{proof}

Using the same method as the proof of Lemma \ref{alphabeta}, we can prove
\begin{lem}\label{lemma1}
For any $\mathbf{u}\in P(\hat\alpha, \hat\beta)$, there exist points  $\mathbf{a}\in \hat\alpha$ and $\mathbf{b}\in \hat\beta$ such that $\mathbf u=\lambda \mathbf{a}+(1-\lambda)\mathbf{b}$ for some real number $\lambda\in [0,1].$
\end{lem}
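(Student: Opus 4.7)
The plan is to mimic the proof of Lemma \ref{alphabeta}, now carried out one dimension higher via the preimages $\hat\alpha=\rho^{-1}(\alpha)$ and $\hat\beta=\rho^{-1}(\beta)$. Because $\rho$ is a linear bijection on $L_d$, the sets $\hat\alpha$ and $\hat\beta$ are again simplices, and $P(\hat\alpha,\hat\beta)=\rho^{-1}(P(\alpha,\beta))$ is a polytope whose vertex set is the disjoint union of the vertex sets of $\hat\alpha$ and $\hat\beta$. Disjointness here is crucial: $\hat\alpha$ lies in the hyperplane $\{v_0=t(A+B)/s-B\}$ and $\hat\beta$ lies in the parallel hyperplane $\{v_0=(t-1)(A+B)/s-B\}$, and these hyperplanes differ by $(A+B)/s\ne 0$.

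Label the vertices of $\hat\alpha$ as $\hat{\mathbf v}_1,\ldots,\hat{\mathbf v}_{n-i}$ and those of $\hat\beta$ as $\hat{\mathbf v}_{n-i+1},\ldots,\hat{\mathbf v}_{n+1}$. Given $\mathbf u\in P(\hat\alpha,\hat\beta)$, I would first write $\mathbf u=\sum_{j=1}^{n+1}\mu_j\hat{\mathbf v}_j$ as a convex combination with $\mu_j\ge 0$ and $\sum_j\mu_j=1$, and set $\lambda:=\mu_1+\cdots+\mu_{n-i}\in [0,1]$, so that $1-\lambda=\mu_{n-i+1}+\cdots+\mu_{n+1}$. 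If $\lambda\in(0,1)$, define $\mathbf a:=\lambda^{-1}\sum_{j=1}^{n-i}\mu_j\hat{\mathbf v}_j$ and $\mathbf b:=(1-\lambda)^{-1}\sum_{j=n-i+1}^{n+1}\mu_j\hat{\mathbf v}_j$. Each is itself a convex combination of vertices of the corresponding simplex, so $\mathbf a\in\hat\alpha$ and $\mathbf b\in\hat\beta$, and by construction $\mathbf u=\lambda\mathbf a+(1-\lambda)\mathbf b$. The boundary cases $\lambda=0$ (so $\mathbf u\in\hat\beta$, take $\mathbf b=\mathbf u$) and $\lambda=1$ (so $\mathbf u\in\hat\alpha$, take $\mathbf a=\mathbf u$) are handled by choosing an arbitrary point in the opposite simplex.

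I do not anticipate a real obstacle here, since the argument is essentially verbatim the same as that of Lemma \ref{alphabeta}, merely lifted along the linear isomorphism $\rho$. The only point meriting a brief check is that $P(\hat\alpha,\hat\beta)$ is indeed the convex hull spanned by the union of vertex sets of $\hat\alpha$ and $\hat\beta$, which follows from the linearity and injectivity of $\rho$ on $L_d$ together with the fact that $\hat\alpha$ and $\hat\beta$ sit in distinct parallel hyperplanes of $\mathbb R^{n+1}$, so no vertex is spuriously introduced or lost in passing from $P(\alpha,\beta)$ to $P(\hat\alpha,\hat\beta)$.
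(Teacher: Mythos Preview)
Your proposal is correct and follows exactly the approach the paper intends: the paper's own proof consists solely of the sentence ``Using the same method as the proof of Lemma \ref{alphabeta}, we can prove [Lemma \ref{lemma1}],'' and you have written out that same convex-combination splitting argument in detail (with the added care of handling the degenerate cases $\lambda\in\{0,1\}$, which the paper leaves implicit).
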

It follows from Lemma \ref{lemma1} that any lattice point in $P(\hat\alpha, \hat\beta)$ must lie in $\hat{\alpha}$ or $\hat{\beta}$ by considering its first coordinate. Since simplices $\alpha$ and $\beta$
contain no lattice points other than vertices, one has $P(\hat\alpha, \hat\beta)$ is an indecomposable simplex.

\begin{lem}\label{lemma2}
Choose two pieces $P(\hat\alpha_1, \hat\beta_1)$ and $P(\hat\alpha_2, \hat\beta_2)$ of (\ref{eqn8}) such that $P(\hat\alpha_1, \hat\beta_1)\subseteq P(\hat\Gamma_{t_1}^i, \hat\Lambda_{t_1-1}^{n-i-1})$ and $P(\hat\alpha_2, \hat\beta_2)\subseteq P(\hat\Gamma_{t_2}^j, \hat\Lambda_{t_2-1}^{n-j-1})$.
If $t_1=t_2$, then
$$P(\hat\alpha_1, \hat\beta_1)\cap P(\hat\alpha_2, \hat\beta_2)=P(\hat\alpha_{12}, \hat\beta_{12}),$$
where $\alpha_{12}=\alpha_1\cap\alpha_2, \beta_{12}=\beta_1\cap\beta_2.$
If $|t_1-t_2|=1$, then
$ P(\hat\alpha_1, \hat\beta_1)\cap P(\hat\alpha_2, \hat\beta_2)=\hat{\beta}_1\cap \hat{\alpha}_2$ or $\hat{\beta}_2\cap \hat{\alpha}_1$.
If $|t_1-t_2|>1$,  then $ P(\hat\alpha_1, \hat\beta_1)\cap P(\hat\alpha_2, \hat\beta_2)=\emptyset.$ In particular, $P(\hat\alpha_1, \hat\beta_1)\cap P(\hat\alpha_2, \hat\beta_2)$ is a common face of $P(\hat\alpha_1, \hat\beta_1)$ and $P(\hat\alpha_2, \hat\beta_2)$.
\end{lem}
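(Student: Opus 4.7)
\emph{Plan.} The main idea is to read off which piece a point belongs to from its first coordinate. Set $c_\tau := (A+B)\tau/s - B$, so $\hat\alpha_k \subset \{x_0 = c_{t_k}\}$ and $\hat\beta_k \subset \{x_0 = c_{t_k-1}\}$. By Lemma \ref{lemma1}, every $\mathbf u \in P(\hat\alpha_k, \hat\beta_k)$ has first coordinate $\lambda c_{t_k} + (1-\lambda) c_{t_k-1}$ for some $\lambda \in [0,1]$, so the first-coordinate range of $P(\hat\alpha_k, \hat\beta_k)$ is exactly $[c_{t_k-1}, c_{t_k}]$. Consecutive $c_\tau$ differ by $(A+B)/s > 0$, so for $|t_1 - t_2| > 1$ the two intervals are disjoint and the intersection is empty. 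For $|t_1 - t_2| = 1$, say $t_2 = t_1+1$, the intervals meet only at $c_{t_1}$: any point of $P(\hat\alpha_1, \hat\beta_1)$ with first coordinate $c_{t_1}$ must correspond to $\lambda = 1$ and so lies in $\hat\alpha_1$, while any point of $P(\hat\alpha_2, \hat\beta_2)$ with that first coordinate must correspond to $\lambda = 0$ and so lies in $\hat\beta_2$. Hence the intersection equals $\hat\alpha_1 \cap \hat\beta_2$, which as an intersection of simplicial faces is automatically a common face of both pieces.

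For $t_1 = t_2 = t$ the first coordinates agree and I switch to the coordinate supports, which are $\{i+1,\ldots,n\}$ for $\Gamma^i_t$ and $\{1,\ldots,i+1\}$ for $\Lambda^{n-i-1}_{t-1}$. In the same-block sub-case $i=j$, write $\mathbf u = \lambda_k \mathbf a_k + (1-\lambda_k)\mathbf b_k$ for $k=1,2$; comparing first coordinates forces $\lambda_1 = \lambda_2 =: \lambda$, whence $\lambda(\mathbf a_1 - \mathbf a_2) = (1-\lambda)(\mathbf b_2 - \mathbf b_1)$. The left side is supported on $\{i+1,\ldots,n\}$ and the right on $\{1,\ldots,i+1\}$, so both sides vanish outside coordinate $i+1$. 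The affine conditions $\sum_{\ell=i+1}^n (\mathbf a_k)_\ell = dt/s$ and $\sum_{\ell=1}^{i+1}(\mathbf b_k)_\ell = d(t-1)/s$ determine coordinate $i+1$ from the others, giving $\mathbf a_1 = \mathbf a_2 \in \hat\alpha_1 \cap \hat\alpha_2$ and $\mathbf b_1 = \mathbf b_2 \in \hat\beta_1 \cap \hat\beta_2$, so $\mathbf u \in P(\widehat{\alpha_1 \cap \alpha_2}, \widehat{\beta_1 \cap \beta_2})$; the reverse inclusion is immediate. For $i \ne j$ (WLOG $i<j$) the two big blocks intersect in the common face $P(\hat\Gamma^j_t, \hat\Lambda^{n-i-1}_{t-1})$ cut out by the hyperplanes $\mathcal H^{i+1}_t, \ldots, \mathcal H^j_t$ constructed in the proof of Proposition \ref{lem3}, so the intersection of the two small pieces lies inside this face; replacing $\alpha_1$ by $\alpha_1 \cap \Gamma^j_t$ and $\beta_2$ by $\beta_2 \cap \Lambda^{n-i-1}_{t-1}$ reduces to the same-block analysis just completed.

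The main obstacle is the shared coordinate $i+1$ in the same-block case: since both $\Gamma^i_t$ and $\Lambda^{n-i-1}_{t-1}$ can have nonzero entries there, the coordinate-support dichotomy alone does not pin down the convex decomposition uniquely. Invoking the two affine sum conditions defining $\Gamma^i_t$ and $\Lambda^{n-i-1}_{t-1}$ is what converts the apparent one-parameter slack in coordinate $i+1$ into an equality. Once this tightening step is in hand, the three cases above combine to give the claim, and a fortiori show that every pairwise intersection of pieces in (\ref{eqn8}) is a common face, which is the property used in the proof of Proposition \ref{lem3}.
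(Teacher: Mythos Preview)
Your proof is correct and follows essentially the same strategy as the paper: dispose of $|t_1-t_2|\ge 1$ via the first coordinate, and for $t_1=t_2$ use the decomposition of Lemma~\ref{lemma1} plus first-coordinate matching to get $\lambda=\mu$, then a coordinate-support argument for $i=j$, while the $i\ne j$ case is reduced to the previous one using the separating hyperplanes $\mathcal H^{i+1}_t,\ldots,\mathcal H^j_t$. Your explicit invocation of the affine constraints $\sum_{\ell}(\mathbf a_k)_\ell=dt/s$ and $\sum_{\ell}(\mathbf b_k)_\ell=d(t-1)/s$ to kill the residual freedom in coordinate $i{+}1$ in fact makes precise a step the paper leaves implicit.
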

\begin{proof}
We only prove the case $t_1=t_2$, the others are similar and much simpler.
It is obvious that $P(\hat\alpha_{12}, \hat\beta_{12})\subseteq P(\hat\alpha_1, \hat\beta_1)\cap P(\hat\alpha_2, \hat\beta_2)$.
It remains to show the other direction.
Let $\mathbf{u}\in P(\hat\alpha_1, \hat\beta_1)\cap P(\hat\alpha_2, \hat\beta_2).$
By Lemma \ref{lemma1}, there exist $\mathbf{a}_1\in \hat{\alpha}_1$, $\mathbf{b}_1\in \hat{\beta}_1$, $\mathbf{a}_2\in \hat{\alpha}_2$,
$\mathbf{b}_2\in \hat{\beta}_2$, and $0\le \lambda,\mu\le 1$ satisfying that
\begin{equation}\label{eqnu}
 \mathbf{u}=\lambda \mathbf{a}_1+(1-\lambda) \mathbf{b}_1=\mu \mathbf{a}_2+(1-\mu) \mathbf{b}_2.
\end{equation}

Comparing the first coordinate of elements of (\ref{eqnu}), we have $\lambda=\mu$. Hence
\begin{equation*}
\lambda (\mathbf{a}_1-\mathbf{a}_2)=(1-\lambda) (\mathbf{b}_2-\mathbf{b}_1).
\end{equation*}
We divide the proof into two cases.

{\sc Case 1.} $j=i$.
The first $i$ coordinates of the left hand side and the last $(n-i-1)$ coordinates of the right hand side is $0$. Thus
$\mathbf{a}_1=\mathbf{a}_2\in \hat\alpha_1\cap\hat\alpha_2=\hat{\alpha}_{12}$ and
$\mathbf{b}_1=\mathbf{b}_2\in\hat\beta_1\cap\hat\beta_2= \hat{\beta}_{12}$. Then $\mathbf u\in P(\hat\alpha_{12}, \hat\beta_{12})$. Thus $P(\hat\alpha_1, \hat\beta_1)\cap P(\hat\alpha_2, \hat\beta_2)\subseteq P(\hat\alpha_{12}, \hat\beta_{12})$.

{\sc Case 2.} $j\neq i$. Without loss of generality, we assume $j>i$.
Note that the hyperplane $\hat{\mathcal{H}}^j_{t_1}$ passing through $\hat{\Gamma}_{t_1}^j$ and $\hat{\Lambda}_{t_1-1}^{n-j}$ separates $P(\hat{\Gamma}_{t_1}^i,\hat{\Lambda}_{t_1-1}^{n-i-1})$ and
$P(\hat{\Gamma}_{t_1}^j,\hat{\Lambda}_{t_1-1}^{n-j-1})$.
Then $\mathbf{b}_1\in \hat{\Lambda}_{t_1-1}^{n-i-1}\subseteq \hat{\Lambda}_{t_1-1}^{n-j}\subseteq \hat{\mathcal{H}}^j_{t_1}$.
Note that $\mathbf{u}\in \hat{\mathcal{H}}^j_{t_1}$.
Then $\mathbf{a}_1\in \hat{\mathcal{H}}^j_{t_1}$. It follows from Lemma \ref{hyperplane} that $\mathbf{a}_1\in \hat{\Gamma}_{t_1}^j$ .
On the other hand, $\hat{\mathcal{H}}^{i+1}_{t_1}$ passing through $\hat{\Gamma}_{t_1}^{i+1}$ and $\hat{\Lambda}_{t_1-1}^{n-i-1}$ also separates $P(\hat{\Gamma}_{t_1}^i,\hat{\Lambda}_{t_1-1}^{n-i-1})$ and
$P(\hat{\Gamma}_{t_1}^j,\hat{\Lambda}_{t_1-1}^{n-j-1})$. Note that $\mathbf{u}\in \hat{\mathcal{H}}^{i+1}_{t_1}$.
Since $\mathbf{a}_2\in \hat{\Gamma}_{t_1}^j\subseteq \hat{\Gamma}_{t_1}^{i+1}\subseteq \hat{\mathcal{H}}^{i+1}_{t_1}$, one has $\mathbf{b}_2\in \hat{\Lambda}_{t_1-1}^{n-i-1}$.
We conclude that $\mathbf{a}_1,\mathbf{a}_2\in \hat{\Gamma}_{t_1}^j$ and $\mathbf{b}_1,\mathbf{b}_2\in \hat{\Lambda}_{t_1-1}^{n-i-1}$. Then using the same argument as Case 1, we conclude that $\mathbf{u}\in P(\hat\alpha_{12}, \hat\beta_{12})$.
Then $P(\hat\alpha_1, \hat\beta_1)\cap P(\hat\alpha_2, \hat\beta_2)\subseteq P(\hat\alpha_{12}, \hat\beta_{12})$.
\end{proof}

\begin{prop}\label{lem1}
Let $n\ge 2$. The Adolphson--Sperber conjecture is true for $\Delta_d$.
\end{prop}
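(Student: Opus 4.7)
The plan is to reduce the proposition to a divisibility condition on simplex determinants, using the triangulation of Proposition \ref{lem3}. Concretely, coning each $n$-dimensional simplex $P(\hat\alpha,\hat\beta)$ of (\ref{eqn8}) through the origin yields an $(n+1)$-dimensional indecomposable integral simplex $\Delta_i$, and together these pieces form a complete regular integral triangulation of $\Delta_d$. Since each $\Delta_i$ is a simplex with $n+1$ nonzero vertices, the restriction $f_i$ of a generic $(A,B)$-polynomial to $\Delta_i$ is diagonal. Thus Corollary \ref{thm21} reduces the claim to showing each such $f_i$ is generically $\Delta_\infty(f_i)$-nondegenerate and ordinary whenever $p\equiv 1\pmod D$.

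Combining Propositions \ref{diagonal} and \ref{prop2}, both properties are implied by the single divisibility $\det\mathbf{M}(\Delta_i)\mid D$: indeed $s_n(\Delta_i)\mid\det\mathbf{M}(\Delta_i)$ yields ordinarity, and $p\equiv 1\pmod D$ forces $p$ coprime to $\det\mathbf{M}(\Delta_i)$ for nondegeneracy. Since $D=[A,B,dB/(A+B,d)]=[A,dB/s]$ with $s=(A+B,d)$, and this already contains $D(\delta_d)=dB/s$ as a divisor, it suffices to prove $|\det\mathbf{M}(\Delta_i)|=dB/s$ for every piece.

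For the computation, list the $n+1$ columns of $\mathbf{M}(\Delta_i)$ with the $n-i$ vertices of $\hat\alpha$ first, followed by the $i+1$ vertices of $\hat\beta$. The rows $x_1,\dots,x_i$ vanish on the $\hat\alpha$ columns, the rows $x_{i+2},\dots,x_n$ vanish on the $\hat\beta$ columns, and only the rows $x_0$ and $x_{i+1}$ mix the two blocks. Subtracting a reference vertex within each of the $\hat\alpha$- and $\hat\beta$-groups and using that $\alpha,\beta$ are unit simplices (so the corresponding vertex-difference submatrices contribute determinant $\pm 1$), the calculation reduces to a $2\times 2$ determinant whose entries come from the two distinct $x_0$-values $\frac{t(A+B)}{s}-B,\,\frac{(t-1)(A+B)}{s}-B$ and the coordinate sums $dt/s,\,d(t-1)/s$; this evaluates to $\pm dB/s$ uniformly in $t,i,\alpha,\beta$. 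A consistency check: the number of pieces in (\ref{eqn8}) is $sd^{n-1}$ (by telescoping $\sum_{i=0}^{n-1}(dt/s)^{n-i-1}(d(t-1)/s)^i=\frac{(dt/s)^n-(d(t-1)/s)^n}{d/s}$), and $sd^{n-1}\cdot dB/s=Bd^n=(n+1)!\,\mathrm{Vol}(\Delta_d)$, exactly as required.

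The main obstacle will be executing the column reduction cleanly in the presence of the shared coordinate $x_{i+1}$, which appears among the nonzero coordinates of both $\hat\alpha$- and $\hat\beta$-vertices, and checking that the $\pm 1$ contributions from the two unit-simplex blocks really do decouple as claimed. I will also separately treat the boundary case $t=1$, where $\hat\beta=\{-B\mathbf{e}'_0\}$ degenerates to a single vertex: expanding along that first column gives $\det\mathbf{M}(\Delta_i)=-B\cdot(\det\text{ of the }n\times n\text{ block from }\hat\alpha)$, and recognizing the latter as the normalized volume $d/s$ of the cone from the origin over a unit simplex in $\mathcal{S}_1$ again produces $|\det\mathbf{M}(\Delta_i)|=dB/s$, completing the verification.
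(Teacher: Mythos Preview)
Your proposal is correct and follows essentially the same approach as the paper: cone each piece $P(\hat\alpha,\hat\beta)$ of the triangulation (\ref{eqn8}) over the origin, show $|\det\mathbf{M}|=(n+1)!\,\mathrm{Vol}=dB/s$ for every piece, and then invoke Propositions~\ref{diagonal}, \ref{prop2} and Corollary~\ref{thm21}. The only cosmetic difference is in the determinant computation---the paper factors it as $\frac{Bs}{t(t-1)d}\cdot\frac{(t-1)d}{s}\cdot\frac{td}{s}$ via projections $\rho_{i+1}$ onto the first $i{+}1$ coordinates, whereas you reduce by column subtraction to a $2\times2$ block times two $\pm1$ unit-simplex blocks; these are equivalent manipulations, and your explicit handling of the $t=1$ boundary case (where the paper's intermediate factor $\frac{Bs}{t(t-1)d}$ is formally singular) and the volume consistency check are welcome additions.
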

\begin{proof}
Let $\alpha\in \Upsilon_t^i, \beta\in\Phi_{t-1}^{n-i-1}$, and $P^0(\hat{\alpha},\hat{\beta})$ be the convex closure in $\mathbb{R}^{n+1}$ generated by the origin and lattice points in $P(\hat{\alpha},\hat{\beta})$.
Denote the vertices of $\hat \beta$ by
$$\mathbf{v}_j=\Big(\frac{(A+B)(t-1)-sB}{s},v_{j,1},\cdots,v_{j,i+1},0,\cdots,0\Big)$$
for $j=1,\ldots,i+1$, the vertices of $\hat\alpha$ by
$$\mathbf{v}_j=\Big(\frac{(A+B)t-sB}{s},0,\cdots,0,v_{j,i+1},\cdots,v_{j,n}\Big)$$
for $j=i+2,\ldots,n+1.$ Using matrix operations, we have
\begin{align*}
(n+1)!{\rm Vol}(P^0(\hat{\alpha},\hat{\beta}))
=&\pm\begin{vmatrix}
\mathbf{v}_1, \cdots, \mathbf{v}_{i+1},  \mathbf{v}_{i+2},   \cdots,  \mathbf{v}_{n+1}
\end{vmatrix}\\
=&\pm\frac{Bs}{t(t-1)d}\begin{vmatrix}
v_{1,1} & \cdots & v_{i+1,1} \\
\vdots&\cdots&\vdots\\
v_{1,i+1} & \cdots & v_{i+1,i+1}
\end{vmatrix}\cdot \begin{vmatrix}
v_{i+2,i+1} &  \cdots & v_{n+1,i+1}\\
\vdots&\vdots&\vdots\\
v_{i+2,n}  &\cdots& v_{n+1,n}
\end{vmatrix},
\end{align*}
where each $\mathbf{v}_j$ is written as a column vector.
Denote by $p_{i+1}: \mathbb R^n\to \mathbb R^{i+1}$ be the projection of $\mathbb R^n$ onto its first $i+1$ coordinates. Let $ \rho_{i+1}=p_{i+1}\circ \rho:L_d\to \mathbb{R}^{i+1}$ be the composition of $\rho$ and $p_{i+1}$.
Then $\rho_{i+1}(\hat{\Lambda}_{t-1}^{i})=
p_{i+1}(\Lambda_{t-1}^{i})\subseteq \mathbb R^{i+1}$
is the simplex with vertices
$$\Big(\frac{(t-1)d}{s},0,\cdots,0\Big), \Big(0,\frac{(t-1)d}{s},0,\cdots,0\Big),\ldots, \Big(0,0,\cdots,0,\frac{(t-1)d}{s}\Big).$$
Then $\rho_{i+1}(\hat\beta)=p_{i+1}(\beta)\in \mathbb R^{i+1}$ is the simplex with vertices
$$(v_{1,1},v_{1,2},\cdots,v_{1,i+1}),(v_{2,1},v_{2,2},\cdots,v_{2,i+1}),\ldots,(v_{i+1,1},v_{i+1,2},\cdots,v_{i+1,i+1}),$$
which is a cell of the hyperplane decompositions of $\rho_{i+1}(\hat\Lambda^i)$ cut by $\{v_l=m\}$ for $l=1,\ldots,i+1$ and $m=1,\ldots,\frac{(t-1)d}{s}-1$.
Calculating the volume of $\rho_{i+1}(\hat\beta)$, we have
$$\begin{vmatrix}
v_{1,1} & \cdots & v_{i+1,1}\\
\vdots&\cdots&\vdots\\
v_{1,i+1} & \cdots & v_{i+1,i+1}
\end{vmatrix}=\pm(i+1)!{\rm Vol}(\rho_{i+1}(\hat\beta))=\pm\frac{(t-1)d}{s}.$$
Similarly, we can prove
\begin{align*}
\begin{vmatrix}
v_{i+2,i+1} &  \cdots & v_{n+1,i+1}\\
\vdots&\cdots&\vdots\\
 v_{i+2,n}  &\cdots& v_{n+1,n}
\end{vmatrix}=\pm \frac{td}{s}.
\end{align*}
Hence
$$(n+1)!{\rm Vol}(P^0(\hat{\alpha},\hat{\beta}))=\frac{dB}{s}.$$

It follows from Proposition \ref{prop2}, Corollary \ref{thm21} that if $p\equiv 1 \mod \frac{dB}{s}$, then $\Delta_d$ is generically ordinary.
This finishes the proof of Proposition \ref{lem1}.
\end{proof}

\begin{proof}[ Proof of Theorem \ref{thm1}.]

If $s=(A+B,d)=1$, it follows from \cite[Theorem 5.1]{FW} that Theorem \ref{thm1} holds.
If $s\ge 2$, it then follows from Proposition \ref{prop2} and \ref{lem1} that $\Delta_d$ is generically ordinary if $p\equiv 1 \mod \frac{dB}{s}$.
 It follows from \cite[Theorem 7.5]{WD1} that the Adolphson--Sperber conjecture is true for $\Delta_d'$.
The face $\delta_d'$ lies on the hyperplane $x_0=A$. Then $D(\Delta_d')=A$ and ${\rm GNP}(\Delta_d',p)={\rm HP}(\Delta_d')$  if $p\equiv 1 \mod A$.
 Note that $\Delta=\Delta_d\cup \Delta_d'$.
Then Theorem \ref{thm1} follows from the facial decomposition theorem.
This finishes the proof of Theorem \ref{thm1}.
\end{proof}

\begin{center}
{\sc Acknowledgements}
\end{center}

The authors would like to thank Professor Daqing Wan for many helpful  suggestions, Professor Jiyou Li for many
helpful discussions and Huaiqian Li for many helpful comments.  L.P. Yang is supported by Beijing Postdoctoral Research Foundation (No. 202022073).

\bibliographystyle{amsplain}

\end{document}